\newcommand\cB{{\mathcal B}}
\newcommand\cF{{\mathcal F}}
\newcommand\cG{{\mathcal G}}
\theoremstyle{plain}
\newtheorem{theorem}{Theorem}[section]
\newtheorem{lemma}[theorem]{Lemma}
\newtheorem{question}[theorem]{Question}
\theoremstyle{definition}
\newtheorem{defn}[theorem]{Definition}
\newtheorem{con}[theorem]{Construction}
\newtheorem{claim}[theorem]{Claim}
\newtheorem*{remark}{Remark}
\newcommand\cref[1]{Corollary~\ref{cor:#1}}
\title{Stability results for vertex Tur\'an problems in Kneser graphs}
\begin{document}

\author{D\'aniel Gerbner$^{a}$, Abhishek Methuku$^{b}$, D\'aniel T. Nagy$^{a}$, \\ Bal\'azs Patk\'os$^{a,}$\thanks{corresponding author.}, M\'at\'e Vizer$^{a}$\\
\small $^a$ Alfr\'ed R\'enyi Institute of Mathematics, Hungarian Academy of Sciences\\
\small P.O.B. 127, Budapest H-1364, Hungary.\\
\small $^b$ Central European University, Department of Mathematics\\
\small Budapest, H-1051, N\'ador utca 9.\\
\medskip
\small \texttt{\{gerbner,nagydani,patkos\}@renyi.hu, \{abhishekmethuku,vizermate\}@gmail.com}
\medskip}
\maketitle

\begin{abstract}
The vertex set of the Kneser graph $K(n,k)$ is $V = \binom{[n]}{k}$ and two vertices are adjacent if the corresponding sets are disjoint. For any graph $F$,  the largest size of a vertex set $U \subseteq V$ such that $K(n,k)[U]$ is $F$-free, was recently determined by Alishahi and Taherkhani, whenever $n$ is large enough compared to $k$ and $F$. In this paper, we determine the second largest size of a vertex set $W \subseteq V$ such that $K(n,k)[W]$ is $F$-free, in the case when $F$ is an even cycle or a complete multi-partite graph. In the latter case, we actually give a more general theorem depending on the chromatic number of $F$. 
\end{abstract}

\textit{Mathematics Subject Classification}: 05C35, 05D05

\textit{Keywords}: vertex Tur\'an problems, set systems, intersection theorems

\section{Introduction}
Tur\'an-type problems are fundamental in extremal (hyper)graph theory. For a pair $H$ and $F$ of graphs, they ask for the maximum number of \textit{edges} that a subgraph $G$ of the host graph $H$ can have without containing the forbidden graph $F$. A variant of this problem is the so-called vertex Tur\'an problem where given a host graph $H$ and a forbidden graph $F$, one is interested in the maximum size of a vertex set $U \subset V(H)$ such that the induced subgraph $H[U]$ is $F$-free.
 
This problem has been studied in the context of several host graphs. In this paper we follow the recent work of Alishahi and Taherkhani \cite{AT}, who determined the exact answer to the vertex Tur\'an problem when $H$ is the \textit{Kneser graph} $K(n,k)$, which is defined on the vertex set $\binom{[n]}{k}=\{K\subseteq [n]=\{1,2,\dots, n\}: |K|=k\}$ where two vertices $K,K'$ are adjacent if and only if $K\cap K'=\emptyset$.

\begin{theorem}[Alishahi, Taherkhani \cite{AT}]\label{at}
For any graph $F$, let $\chi$ denote its chromatic number and let $\eta=\eta(F)$ denote the minimum possible size of a color class of $G$ over all possible proper $\chi$-colorings of $F$. Then for any $k$ there exists an integer $n_0=n_0(k,F)$ such that if $n\ge n_0$ and for a family $\cG\subseteq \binom{[n]}{k}$ the induced subgraph $K(n,k)[\cG]$ is $F$-free, then $|\cG|\le \binom{n}{k}-\binom{n-\chi+1}{k}+\eta -1$. Moreover, if equality holds, then there exists a $(\chi-1)$-set $L$ such that $|\{G\in \cG: G\cap L=\emptyset\}| = \eta-1$.
\end{theorem}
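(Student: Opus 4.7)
I would fix a $(\chi-1)$-subset $L = \{l_1, \ldots, l_{\chi-1}\} \subseteq [n]$ and an extra element $x \in [n] \setminus L$, and take $\cG$ to be all $k$-sets meeting $L$ together with any $\eta - 1$ sets from $\binom{[n]\setminus L}{k}$ that all contain $x$; then $|\cG| = \binom{n}{k} - \binom{n-\chi+1}{k} + \eta - 1$. Coloring each $G \in \cG$ by $\min\{i : l_i \in G\}$ when $G$ meets $L$ and by $\chi$ otherwise partitions $\cG$ into $\chi$ pairwise intersecting subfamilies, i.e., gives a proper $\chi$-coloring of $K(n,k)[\cG]$. Since the $\chi$-th color class has size $\eta - 1$ and $\chi(F) = \chi$, any hypothetical embedded copy of $F$ would inherit a proper $\chi$-coloring using all $\chi$ colors, so one class would have size at most $\eta - 1$, contradicting the definition of $\eta(F) = \eta$.

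\textbf{Upper bound, first reduction.} Suppose $|\cG| \ge \binom{n}{k} - \binom{n-\chi+1}{k} + \eta$; the goal is to embed $F$ into $K(n,k)[\cG]$. Since the number of $k$-subsets of $[n]$ meeting a fixed $(\chi-1)$-set $D$ is exactly $\binom{n}{k} - \binom{n-\chi+1}{k}$, the family $\cG_D = \{G \in \cG : G \cap D = \emptyset\}$ satisfies $|\cG_D| \ge \eta$ for every $(\chi-1)$-set $D \subseteq [n]$. I would then reduce the problem to embedding a complete $\chi$-partite Kneser substructure $K(\chi; t_1, \ldots, t_\chi)$ into $K(n,k)[\cG]$ where $t_i$ dominates the $i$-th color class size of some proper $\chi$-coloring of $F$ witnessing $\eta$ (so $t_1 \ge \eta$); any such substructure automatically contains $F$.

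\textbf{Constructing the multipartite substructure, main obstacle, and the moreover clause.} I would build the multipartite structure by induction on $\chi$: pick $A_1 \in \cG$, pass to $\cG' = \{G \in \cG : G \cap A_1 = \emptyset\} \subseteq \binom{[n]\setminus A_1}{k}$, inductively embed $K(\chi-1; t_2, \ldots, t_\chi)$ into $K(n-k,k)[\cG']$, and append $A_1$ together with $\eta - 1$ further sets of $\cG$ (intersecting $A_1$ but disjoint from the inductively built piece) to form the first class of size $\eta$. The \emph{main obstacle} is that the condition $|\cG_D| \ge \eta$ only controls $(\chi-1)$-sets $D$ and not the $k$-set $A_1$, so one must ensure $|\cG'|$ stays above the inductive threshold; here the largeness of $n$ relative to $k$ and $|V(F)|$ is essential, as an averaging argument over the choice of $A_1 \in \cG$ gives some $A_1$ with $|\cG'| \ge |\cG| \cdot \binom{n-k}{k}/\binom{n}{k}$, which after simplification exceeds the inductive threshold for $n$ sufficiently large. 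Finally, the moreover clause follows by replaying the above with equality: if $|\cG| = \binom{n}{k} - \binom{n-\chi+1}{k} + \eta - 1$ and every $(\chi-1)$-set $D$ satisfied $|\cG_D| \ge \eta$, the same embedding would yield $F$, contradicting $F$-freeness; hence some $(\chi-1)$-set $L$ realizes $|\cG_L| \le \eta - 1$, and combined with the automatic bound $|\cG_L| \ge \eta - 1$ one obtains $|\cG_L| = \eta - 1$.
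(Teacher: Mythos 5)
The paper states this theorem as a result of Alishahi and Taherkhani \cite{AT} and does not include a proof, so there is no in-paper argument to compare against; I therefore evaluate your proposal on its own merits.

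Your lower bound construction and its analysis are correct: the $\chi$ colour classes you describe are pairwise intersecting subfamilies, so the colouring is a proper colouring of $K(n,k)[\cG]$; any embedded copy of $F$ would inherit a proper colouring in $\chi$ colours and, since $\chi(F)=\chi$, would have to use colour $\chi$ on at least one but at most $\eta-1$ vertices, giving $\eta(F)\le\eta-1$, a contradiction. This part is fine.

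The upper bound, however, has a genuine gap at the averaging step. You assert that averaging over $A_1\in\cG$ produces some $A_1\in\cG$ with $|\cG'|\ge|\cG|\cdot\binom{n-k}{k}/\binom{n}{k}$. Averaging over $A_1\in\cG$ actually yields an average of $2e(K(n,k)[\cG])/|\cG|$ for $|\cG'|$, which can be far smaller: take $\chi=2$ and let $\cG$ consist of all $k$-sets containing the element $1$ together with $\eta$ extra sets avoiding $1$; then $|\cG|=\binom{n-1}{k-1}+\eta$ exceeds the threshold, yet for every $A_1\ni 1$ (i.e.\ for almost every $A_1\in\cG$) the family $\cG'$ has size at most $\eta$, nowhere near $(1-o(1))|\cG|$. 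If instead you average over all $A_1\in\binom{[n]}{k}$ you do obtain the stated inequality, but then the chosen $A_1$ need not belong to $\cG$, and your construction of the first colour class explicitly requires $A_1\in\cG$. Repairing this needs either a lower bound on $e(K(n,k)[\cG])$ (which is delicate when $\cG$ is nearly intersecting and typically forces a case split on $\ell(\cG)$, much as the paper does in its proof of Theorem~\ref{stabst}) or a structurally informed choice of $A_1$ via a Hilton--Milner-type description of the large intersecting core of $\cG$. As written, the inductive step is not justified.

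The moreover clause is also not rigorously established. From $|\cG|=\binom{n}{k}-\binom{n-\chi+1}{k}+\eta-1$ you correctly get $|\cG_D|\ge\eta-1$ for every $(\chi-1)$-set $D$. You then claim that if $|\cG_D|\ge\eta$ held for all $D$, ``the same embedding would yield $F$.'' But your embedding argument was driven by the inequality $|\cG|\ge\binom{n}{k}-\binom{n-\chi+1}{k}+\eta$ (specifically, in the averaging step), not by the condition $|\cG_D|\ge\eta$ for all $D$. Those hypotheses are not interchangeable: equality in $|\cG|$ together with $|\cG_D|\ge\eta$ for all $D$ is logically consistent and does not allow you to run the same averaging. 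To make the moreover part work, the embedding argument would have to be reorganized so that it uses only the local hypothesis on the $\cG_D$'s, which you have not done.
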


Observe that the vertex Tur\'an problem in the Kneser graph $K(n,k)$ generalizes several intersection problems in $\binom{[n]}{k}$:
\begin{itemize}
\item
If $F=K_2$, the graph consisting a single edge, then the vertex Tur\'an problem asks for the maximum size of an independent set in $K(n,k)$ or equivalently the size of a largest \textit{intersecting} family $\cF\subseteq \binom{[n}{k}$ (i.e. $F\cap F'\neq \emptyset$ for all $F,F'\in \cF$). The celebrated theorem of Erd\H os, Ko, and Rado states that this is $\binom{n-1}{k-1}$ if $2k\le n$ holds. Furthermore, for intersecting families $\cF\subseteq \binom{[n]}{k}$ of size $\binom{n-1}{k-1}$ we have $\cap_{F\in\cF}F\neq \emptyset$ provided $n\ge 2k+1$.
\item
If $F=K_s$ for some $s\ge 3$, then the vertex Tur\'an problem is equivalent to Erd\H os's famous matching conjecture: $K(n,k)[\cF]$ is $K_s$-free if and only if $\cF$ does not contain a \textit{matching} of size $s$ ($s$ pairwise disjoint sets). Erd\H os conjectured that the maximum size of such a family is $\max\{\binom{sk-1}{k}, \binom{n}{k}-\binom{n-s+1}{k}\}$. 
\item
Gerbner, Lemons, Palmer, Patk\'os, and Sz\'ecsi \cite{Getal} considered \textit{$l$-almost intersecting} families $\cF\subseteq \binom{[n]}{k}$ such that for any $F\in \cF$ there are at most $l$  sets in $\cF$ that are disjoint from $F$. This is equivalent to $K(n,k)[\cF]$ being $K_{1,l}$-free. 
\item
Katona and Nagy \cite{KN} considered \textit{$(s,t)$-union intersecting} families $\cF\subseteq \binom{[n]}{k}$ such that for any 
$F_1,F_2,$ $\dots, F_s,$ $F'_1,F'_2,$ $\dots,F'_t\in \cF$ we have $(\cup_{i=1}^sF_i)\cap (\cup_{j=1}^tF'_j)\neq \emptyset$. This is equivalent to $K(n,k)[\cF]$ being $K_{s,t}$-free. 
\end{itemize}

Theorem \ref{at} leads into several directions. One can try to determine the smallest value of the threshold $n_0(k,G)$. Alishahi and Taherkhani \cite{AT} improved the upper bound on $n_0$ for $l$-almost intersecting and $(s,t)$-union intersecting families. Erd\H os's matching conjecture is known to hold if $n\ge (2s+1)k-s$. This is due to Frankl \cite{F2} and he also showed \cite{F} that the conjecture is true if $k=3$.

Another direction is to determine the ``second largest" family with 
$K(n,k)[\cF]$ being $G$-free. In the case of $F=K_2$ this means that we are looking for the largest intersecting family $\cF\subseteq \binom{[n]}{k}$ with $\cap_{F\in \cF}F=\emptyset$. This is the following famous result of Hilton and Milner.

\begin{theorem}[Hilton, Milner \cite{HM}]\label{hm}
If $\cF\subseteq \binom{[n]}{k}$ is an intersecting family with $n\ge 2k+1$ and $\cap_{F\in\cF}F=\emptyset$, then $|\cF|\le \binom{n-1}{k-1}-\binom{n-k-1}{k-1}+1$.
\end{theorem}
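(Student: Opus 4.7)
The plan is the classical shifting and structural-counting approach. The extremal family is the Hilton--Milner family $\cH := \{F \in \binom{[n]}{k} : 1 \in F,\ F \cap A \ne \emptyset\} \cup \{A\}$ with $A := \{2,\ldots,k+1\}$, which is intersecting, has empty total intersection, and has size $\binom{n-1}{k-1} - \binom{n-k-1}{k-1} + 1$; so the bound is sharp, and what remains is the matching upper bound.

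For the upper bound I would first reduce to a left-compressed (shifted) $\cF$ via the shift operators $S_{ij}$, which preserve intersecting and $|\cF|$. Some care is required because shifting can turn $\bigcap \cF = \emptyset$ into $\bigcap \cF = \{1\}$ (making $\cF$ into a star); one handles this either by restricting to shifts that cannot produce $1$ as a common element, or by separately tracking the last shift making $1$ universal and extracting a direct bound in that case. Assuming then that $\cF$ is shifted and $\bigcap \cF = \emptyset$, some $F \in \cF$ avoids $1$, and iterated shifting pushes $F$ down to $A$, so $A \in \cF$. Split $\cF = \cF_1 \sqcup \cF_2$ according to whether $1 \in F$; then $A \in \cF_2$. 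Writing $\cF^1 := \{F \setminus \{1\} : F \in \cF_1\} \subseteq \binom{[2,n]}{k-1}$, the pair $(\cF^1, \cF_2)$ is cross-intersecting on $[2,n]$, and the target reduces to showing $|\cF^1| + |\cF_2| \le \binom{n-1}{k-1} - \binom{n-k-1}{k-1} + 1$.

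Writing $U := \bigcup_{G \in \cF_2}\{F \in \binom{[2,n]}{k-1} : F \cap G = \emptyset\}$, the cross-intersecting property gives $|\cF^1| \le \binom{n-1}{k-1} - |U|$, and the inequality becomes $|U| \ge \binom{n-k-1}{k-1} + |\cF_2| - 1$. The first $\binom{n-k-1}{k-1}$ is the $G = A$ contribution (the $(k-1)$-subsets of $\{k+2,\ldots,n\}$), and the main obstacle is to extract $|\cF_2| - 1$ additional distinct elements of $U$ from $G \in \cF_2 \setminus \{A\}$. For each such $G \ne A$, $G \subseteq [2,n]$ misses some $x \in [2,k+1]$ and contains some $y \ge k+2$; any $(k-1)$-subset $F_G \subseteq [2,n] \setminus G$ containing $x$ meets $A$ and hence lies in $U$ but not in the $A$-contribution. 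To select the $F_G$ distinctly across $G \in \cF_2 \setminus \{A\}$, I would formulate this as a system of distinct representatives on the sets $\{F \in \binom{[2,n]}{k-1} : F \cap G = \emptyset,\ F \cap A \ne \emptyset\}$ and verify Hall's condition by exploiting the shifted structure of $\cF_2$ (each $G$ inherits enough leftmost elements to generate ample witnesses); this verification is the technical crux I expect to be the main obstacle.
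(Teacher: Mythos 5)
The paper does not prove this statement; it is the classical Hilton--Milner theorem, quoted from \cite{HM} and used as a black box, so there is no internal proof to compare against. Judged on its own, your outline follows the standard Frankl--F\"uredi route (shift, extract $A=[2,k+1]$, split into $\cF_1$ and $\cF_2$, exploit cross-intersection), and the reductions you do carry out are correct. But the proof is not complete: the entire combinatorial content of Hilton--Milner is concentrated in the step you defer, and what you say about it is not enough to believe it goes through.

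Concretely, since $W_A=\emptyset$ and $\binom{[k+2,n]}{k-1}\subseteq U$, your target $|U|\ge\binom{n-k-1}{k-1}+|\cF_2|-1$ is exactly the statement $\bigl|\bigcup_{G\in\cF_2\setminus\{A\}}W_G\bigr|\ge|\cF_2|-1$; this is precisely the nonempty cross-intersecting inequality of Frankl and Tokushige ($|\cA|+|\cB|\le\binom{m}{k-1}-\binom{m-k}{k-1}+1$ for nonempty cross-intersecting $\cA\subseteq\binom{[m]}{k}$, $\cB\subseteq\binom{[m]}{k-1}$, $m\ge 2k-1$), which is itself a theorem requiring a genuine induction-with-shifting argument, not a routine verification. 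Your proposed route via an SDR and ``ample witnesses'' cannot work by slack alone: for $k=3$ and $\cF=\{F:|F\cap[3]|\ge 2\}$ (the second extremal family of Hilton--Milner), one computes $\bigcup_G W_G=\{\{4,y\}:y\in[5,n]\}$ of size exactly $|\cF_2|-1$, so Hall's condition holds with equality and any argument must use the intersecting and shifted structure of $\cF_2$ in an essential, exact way. Moreover, a priori $|\cF_2|$ can be of order $n^{k-1}$ while all witnesses meeting $A$ number only $O(n^{k-2})$, so even the feasibility of the SDR depends on first bounding $|\cF_2|$ using the structure you have not yet invoked. Together with the only-gestured-at issue of shifting destroying $\bigcap_{F\in\cF}F=\emptyset$, this leaves the proposal as a correct skeleton with the load-bearing step missing.
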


In the case of $F=K_{s,t}$ extremal families are not intersecting, so to describe the condition of being ``second largest'' precisely, we introduce the following parameter.

\begin{defn} For a family $\cF$ and integer $t\ge 2$ let $\ell_t(\cF)$ denote the minimum number $m$ such that one can remove $m$ sets from $\cF$ with the resulting family not containing $t$ pairwise disjoint sets. We will write $\ell(\cF)$ instead of $\ell_2(\cF)$. Note that this is the minimum number of sets one needs to remove from $\cF$ in order to obtain an intersecting family.
\end{defn}
Observe that if $s\le t$, then for any family $\cF$ with $\ell(\cF)\le s-1$ the induced subgraph $K(n,k)[\cF]$ is $K_{s,t}$-free. In \cite{AT}, the following asymptotic stability result was proved.

\begin{theorem}[Alishahi, Taherkhani \cite{AT}]\label{at2}
For any integers $s\le t$ and $k$,  and positive real number $\beta$, there exists an $n_0=n_0(k,s,t, \beta)$ such that the following holds for $n\ge n_0$. If for $\cF\subseteq \binom{[n]}{k}$ with $\ell(\cF)\ge s$, the induced subgraph $K(n,k)[\cF]$ is $K_{s,t}$-free, then $|\cF|\le (s+\beta)(\binom{n-1}{k-1}-\binom{n-k-1}{k-1})$ holds.
\end{theorem}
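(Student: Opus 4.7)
Set $HM(n,k):=\binom{n-1}{k-1}-\binom{n-k-1}{k-1}$; the claimed bound is $(s+\beta)HM(n,k)$, which for fixed $k$ is of order $n^{k-2}$, far below a full star $|S_x|=\binom{n-1}{k-1}=\Theta(n^{k-1})$. The proof must therefore use the hypothesis $\ell(\cF)\geq s$ crucially, to rule out that $\cF$ is concentrated inside a single star. The key reformulation I would use is the following dual statement of $K_{s,t}$-freeness: for any $t$ distinct sets $F_1',\dots,F_t'\in\cF$, at most $s-1$ sets of $\cF$ are disjoint from $F_1'\cup\cdots\cup F_t'$ (else together with $s$ such sets we would have a $K_{s,t}$).

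The central step is a degree analysis. Let $x^*\in[n]$ maximize $d(x):=|\{F\in\cF:x\in F\}|$, and set $\cF_{x^*}:=\{F\in\cF:x^*\in F\}$, $\cF':=\cF\setminus\cF_{x^*}$. Since removing $\cF'$ makes $\cF$ intersecting (leaving the star $\cF_{x^*}$), we have $|\cF'|\geq \ell(\cF)\geq s$. The main case is $|\cF'|\geq t$: pick $t$ distinct sets $F_1',\dots,F_t'\in\cF'$, ideally chosen to share a common $(k-1)$-kernel so that $|F_1'\cup\cdots\cup F_t'|=k+t-1$ is as small as possible. By the reformulation, at most $s-1$ sets of $\cF_{x^*}$ are disjoint from this union, while the rest contain $x^*$ and intersect the fixed union of size $k+t-1$, whose count is $\binom{n-1}{k-1}-\binom{n-k-t}{k-1}$. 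Hence
\[
|\cF_{x^*}|\leq (s-1)+\binom{n-1}{k-1}-\binom{n-k-t}{k-1},
\]
which is asymptotically $\frac{k+t-2}{k-1}HM(n,k)$. If $\cF'$ does not admit such a sunflower of $t$ sets with a common $(k-1)$-kernel, I would settle for pairwise disjoint $F_j'$; this gives a weaker constant factor of about $t$ in place of $\frac{k+t-2}{k-1}$, but the excess can be absorbed into the slack $\beta$ when $n$ is large enough.

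To pass from this per-star bound to a bound on $|\cF|=|\cF_{x^*}|+|\cF'|$, one iterates: remove $\cF_{x^*}$ and rerun the analysis on $\cF^{(1)}:=\cF'$, which remains $K_{s,t}$-free. The main obstacle is that the hypothesis $\ell(\cF^{(i)})\geq s'$ for an appropriate $s'$ is not automatically inherited after peeling, so I would maintain the weakened invariant that at each stage either (i) the residual family is still ``far from intersecting'' and a further peel applies with a per-star bound of order $HM(n,k)$, or (ii) the residual is an intersecting family whose size is bounded by $\binom{n-1}{k-1}$ via Theorem~\ref{at}, but this case can only be entered after sufficiently many peels. Showing that at most $O(s)$ iterations occur before reaching case (ii) and that the error terms telescope into $\beta\cdot HM(n,k)$ then yields the conclusion. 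The hardest technical point is the bookkeeping in this iteration when the residual becomes too sparse to contain a useful sunflower; this is precisely where the threshold $n_0(k,s,t,\beta)$ is forced to be large.
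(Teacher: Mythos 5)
There is a genuine gap, and in fact two. (Note first that the paper does not prove this statement at all --- it is quoted from Alishahi and Taherkhani \cite{AT}; the closest in-paper arguments are Lemma \ref{easy} and Case I of the proof of Theorem \ref{stabst}, and they reveal what your argument is missing.) Your central quantitative step uses the wrong side of the forbidden $K_{s,t}$. Taking $t$ sets $F_1',\dots,F_t'$ outside the star and bounding the star by the sets meeting $F_1'\cup\cdots\cup F_t'$ gives $|\cF_{x^*}|\le (s-1)+\binom{n-1}{k-1}-\binom{n-tk-1}{k-1}\approx t\cdot HM(n,k)$ in general (your sunflower refinement to $\tfrac{k+t-1}{k}HM(n,k)$ is both unjustified --- nothing forces $\cF'$ to contain such a sunflower --- and still insufficient for large $t$). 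Since $t$ may greatly exceed $s$ while $\beta>0$ is an arbitrarily small fixed constant, this excess cannot be ``absorbed into the slack $\beta$''; the claimed coefficient is $s+\beta$, not $t+\beta$. The correct move, which is exactly where the hypothesis $\ell(\cF)\ge s$ enters, is dual to yours: pick $s$ sets $H_1,\dots,H_s$ \emph{outside} the star (they exist because the star is intersecting, so $|\cF\setminus\cF_{x^*}|\ge\ell(\cF)\ge s$), and note that at most $t-1$ sets of the star are disjoint from $H_1\cup\cdots\cup H_s$, since $t$ such sets would complete a $K_{s,t}$. All remaining star sets contain $x^*$ and meet a fixed set of size at most $sk$, so $|\cF_{x^*}|\le (t-1)+\binom{n-1}{k-1}-\binom{n-sk-1}{k-1}=(s+o(1))\bigl(\binom{n-1}{k-1}-\binom{n-k-1}{k-1}\bigr)$, which is the bound you actually need. (Your main case also quietly assumes $|\cF'|\ge t$, whereas the hypothesis only yields $|\cF'|\ge s$.)

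The second gap is the treatment of $\cF\setminus\cF_{x^*}$. Your fallback case (ii) allows the residual after a peel to be an intersecting family of size up to $\binom{n-1}{k-1}=\Theta(n^{k-1})$, which on its own already exceeds the target bound $\Theta(n^{k-2})$; nothing in your invariant excludes this after the very first peel (a priori $\cF'$ could be a near-full star at another vertex), so the iteration does not close. The standard way to control the part outside the star is not peeling but a dichotomy on $\ell(\cF)$: if $\ell(\cF)$ is small (say at most $\bigl(\binom{n-1}{k-1}-\binom{n-k-1}{k-1}\bigr)^{1-1/(3s)}$), then the largest intersecting subfamily has size exceeding the Hilton--Milner bound, hence is a star, and the argument of the previous paragraph applies with only $\ell(\cF)$ leftover sets to add; if $\ell(\cF)$ is large, then Lemma \ref{bbn} forces $e(K(n,k)[\cF])=\Omega(\ell(\cF)^2)$, which exceeds the K\H ov\'ari--S\'os--Tur\'an bound $O(|\cF|^{2-1/s})$ for $K_{s,t}$-free graphs, a contradiction. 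Your proposal contains neither this edge-counting step nor any substitute for it.
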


Note that the above bound is asymptotically optimal as shown by any family $\cF_{s,t}=\{F\in \binom{[n]}{k}:1\in F, F\cap S\neq \emptyset\}\cup \{H_1,H_2,\dots,H_s\} \cup \{F'_1,F'_2,\dots,F'_{t-1}\}$, where $S= [2,sk+1]$, $H_i=[(i-1)k+2, ik+1]$ for all $i=1,2,\dots,s$ and $F'_1,F'_2,\dots,F'_{t-1}$ are distinct sets containing 1 and disjoint with $S$.

We improve Theorem \ref{at2} to obtain the following precise stability result for families $\cF$ for which $K(n,k)[\cF]$ is $K_{s,t}$-free.

\begin{theorem}\label{stabst}
For any $2\le s\le t$ and $k$ there exists $n_0=n_0(s,t,k)$ such that the following holds for $n\ge n_0$. If $\cF\subseteq \binom{[n]}{k}$ is a family with $\ell(\cF)\ge s$ and $K(n,k)[\cF]$ is $K_{s,t}$-free, then we have $|\cF|\le \binom{n-1}{k-1}-\binom{n-sk-1}{k-1}+s+t-1$. Moreover, equality holds if and only if $\cF$ is isomorphic to some $\cF_{s,t}$.
\end{theorem}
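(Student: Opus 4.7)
Suppose $\cF\subseteq\binom{[n]}{k}$ satisfies the hypotheses of the theorem with $|\cF|\ge\binom{n-1}{k-1}-\binom{n-sk-1}{k-1}+s+t-1$ and $n$ is sufficiently large. The plan is to locate a ``near-universal'' element of $\cF$, then determine the precise structure in two stages.

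\textbf{Step 1 (Locating the center).} Applying Theorem~\ref{at2} with a small $\beta$ yields $|\cF|=O(n^{k-2})$. I would upgrade this to the following quantitative statement: there exists $x\in[n]$ (relabeled to $x=1$) such that $\cF_0:=\{F\in\cF:1\notin F\}$ has size at most $c=c(k,s,t)$ for some constant independent of $n$, so that $\cF_1:=\cF\setminus\cF_0$ is close to the full star through $1$. The driving force is the $K_{s,t}$-free property, which states that for any $t$ sets $B_1,\dots,B_t\in\cF$, at most $s-1$ sets of $\cF$ are disjoint from $\bigcup_i B_i$. If $|\cF_0|\ge t$, picking $t$ sets in $\cF_0$ with union $U$ of size $\le tk$ forces sets in the near-full star $\cF_1$ disjoint from $U$ to number $\approx\binom{n-|U|-1}{k-1}=\Theta(n^{k-2})\gg s-1$, a contradiction unless $\cF_1$ is far from full---but then $|\cF|$ falls short of the extremal bound.

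\textbf{Step 2 (Pairwise disjoint sets in $\cF_0$).} Since $\cF_1$ is intersecting, $\ell(\cF)\ge s$ implies $|\cF_0|\ge s$. I would then argue $\cF_0$ contains $s$ pairwise disjoint sets $H_1,\dots,H_s$. Any $s$ pairwise disjoint sets of $\cF$ include at most one from $\cF_1$ (which is intersecting), so at least $s-1$ lie in $\cF_0$; a case analysis combining $\ell(\cF)\ge s$ with $K_{s,t}$-freeness rules out the ``mixed'' configuration where one disjoint set sits in $\cF_1$, leaving all $s$ in $\cF_0$. Set $S:=H_1\cup\cdots\cup H_s$, so $|S|=sk$ and $1\notin S$.

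\textbf{Step 3 (Counting and finishing).} The $K_{s,t}$-free property with $\{H_1,\dots,H_s\}$ as one side gives $|D|\le t-1$ where $D:=\{F\in\cF:F\cap S=\emptyset\}$. Writing $d_+:=|\cF_0\setminus D|$, I obtain
\[
|\cF|=|\cF_1\setminus D|+|\cF_0\setminus D|+|D|\le\left(\binom{n-1}{k-1}-\binom{n-sk-1}{k-1}\right)+d_++(t-1),
\]
since star sets through $1$ meeting $S$ number exactly $\binom{n-1}{k-1}-\binom{n-sk-1}{k-1}$. The lower bound on $|\cF|$ forces $d_+\ge s$; combined with $H_1,\dots,H_s\in\cF_0\setminus D$, equality $d_+=s$ must hold, so $\cF_0\setminus D=\{H_1,\dots,H_s\}$. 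Moreover $|D|=t-1$ and $\cF_1\setminus D$ consists of all sets through $1$ meeting $S$. Finally, $\cF_0\cap D=\emptyset$: any $F\in\cF_0$ disjoint from $S$ would give $s$ pairwise disjoint sets $\{F,H_2,\dots,H_s\}$ whose common disjoint neighborhood in $\cF$ contains $H_1$ together with the $\Theta(n^{k-2})$ star sets through $1$ meeting $H_1$ but disjoint from $F\cup H_2\cup\cdots\cup H_s$, far exceeding $t-1$ and contradicting $K_{s,t}$-freeness. Hence $\cF_0=\{H_1,\dots,H_s\}$ and $\cF\cong\cF_{s,t}$.

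The main obstacle is \textbf{Step 1}: boosting the asymptotic stability of Theorem~\ref{at2} to a quantitative bounded-deficiency statement in which $|\cF_0|$ is controlled by a constant independent of $n$. A secondary difficulty is the case analysis in Step 2 guaranteeing that all $s$ pairwise disjoint witnesses can be chosen from $\cF_0$.
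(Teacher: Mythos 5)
Your overall shape (find a ``center'' $x$, then count sets meeting $S=\bigcup H_i$ and sets disjoint from $S$) matches the spirit of the paper's main case, but the proposal has genuine gaps at every step, and the step you flag as the main obstacle is attacked with a quantitatively false heuristic.

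First, Step 1. Your dichotomy ``either $\cF_1$ is near the full star through $1$, whence $\Theta(n^{k-2})$ members of $\cF_1$ avoid $U$, or $\cF_1$ is far from full and $|\cF|$ falls short'' is not valid: the extremal family $\cF_{s,t}$ itself has $|\cF_1|=\Theta(n^{k-2})$, i.e.\ only a $\Theta(1/n)$ fraction of the star $\binom{n-1}{k-1}=\Theta(n^{k-1})$, and a union $U$ of $t$ sets from $\cF_0$ can meet all but $t-1$ members of $\cF_1$ (take $U\supseteq S$). So ``far from full'' is compatible with attaining the bound, and the claimed supply of sets in $\cF_1$ disjoint from $U$ need not exist. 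More fundamentally, a center need not exist a priori: the paper's proof splits on $\ell(\cF)$, obtains the center from the Hilton--Milner theorem only when $\ell(\cF)$ is small, handles the intermediate range $s+1\le \ell(\cF)\le(\cdot)^{1-1/(3s)}$ with a separate counting lemma (Lemma~\ref{easy}), and kills the regime of large $\ell(\cF)$ — where no center exists — by an edge count (Lemma~\ref{bbn}) combined with K\H ov\'ari--S\'os--Tur\'an. Your proposal has no substitute for the last two regimes, and Theorem~\ref{at2} gives only a size bound, not structure.

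Second, Step 2 is both unjustified and unnecessary. The hypothesis $\ell(\cF)\ge s$ is a vertex-cover condition on the disjointness graph and does not yield $s$ pairwise disjoint sets (a clique): e.g.\ a star plus $s$ pairwise intersecting sets avoiding the kernel has $\ell=s$ but no three pairwise disjoint members. The paper avoids this entirely: it removes \emph{any} $s$ sets $F_1,\dots,F_s$ whose deletion leaves an intersecting family, sets $T=\bigcup F_i$ with $|T|\le sk$, bounds $|\cF|$ by $\binom{n-1}{k-1}-\binom{n-|T|-1}{k-1}+s+t-1$, and lets equality force $|T|=sk$, i.e.\ pairwise disjointness, a posteriori. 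Third, your final count in Step 3 needs $|\cF_0\setminus D|\le s$, which does not follow from the bounded-deficiency statement $|\cF_0|\le c(k,s,t)$ you posited in Step 1; the paper gets exactly $s$ exceptional sets because it works in the case $\ell(\cF)=s$. As written, the proposal would need all of these repairs, which amount to reconstructing the paper's case analysis.
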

\bigskip

Using Theorem \ref{stabst}, we obtain a general stability result for the case when $F$ is a complete multi-partite graph. We consider the family $\cF_{s_1,s_2,\dots,s_{r+1}}$ that consists of $s_{r+1}$ pairwise disjoint $k$-subsets $F_1,F_2,\dots, F_{s_{r+1}}$ of $[n]$ that do not meet $[r]$ and those $k$-subsets of $[n]$ that either (i) intersect $[r-1]$ or (ii) contain $r$ and meet $\cup_{j=1}^{s_{r+1}}F_j$ and (iii) $s_r-1$ other $k$-sets containing $r$. Clearly, if $s_1\ge s_2\ge\dots \ge s_r\ge s_{r+1}$ holds, then $K(n,k)[\cF_{s_1,s_2,\dots,s_{r+1}}]$ is $K_{s_1,s_2,\dots,s_{r+1}}$-free and its size is $\binom{n}{k}-\binom{n-r+1}{k}+\binom{n-r}{k-1}-\binom{n- s_{r+1}k-r}{k-1}+s_r+s_{r+1}-1$.

\begin{theorem}\label{stabmulti}
For any $k\ge 2$ and integers $s_1\ge s_2 \ge \dots \ge s_r\ge s_{r+1}\ge 1$ there exists $n_0=n_0(k,s_1,\dots,s_{r+1})$ such that if $n\ge n_0$ and $\cF\subseteq \binom{[n]}{k}$ is a family with $\ell_{r+1}(\cF)\ge s$ and $K(n,k)[\cF]$ is $K_{s_1,s_2,\dots,s_{r+1}}$-free, then we have $|\cF|\le \binom{n}{k}-\binom{n-r+1}{k}+\binom{n-r}{k-1}-\binom{n- s_{r+1}k-r}{k-1}+s_r+s_{r+1}-1$. Moreover, equality holds if and only if $\cF$ is isomorphic to some $\cF_{s_1,s_2,\dots,s_{r+1}}$.
\end{theorem}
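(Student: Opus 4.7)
The plan is to reduce the problem to the case $r=1$ --- which is exactly Theorem \ref{stabst} --- by identifying an $(r-1)$-set $M\subseteq [n]$ of popular vertices, discarding all sets of $\cF$ that meet $M$, and arguing that the residual family on $[n]\setminus M$ must be $K_{s_r,s_{r+1}}$-free in the induced Kneser graph $K(n-r+1,k)$. Writing $\cF_1=\{F\in\cF:F\cap M\neq\emptyset\}$ and $\cF_0=\cF\setminus\cF_1$, the trivial bound $|\cF_1|\le \binom{n}{k}-\binom{n-r+1}{k}$ combined with the estimate $|\cF_0|\le \binom{n-r}{k-1}-\binom{n-r-s_{r+1}k}{k-1}+s_r+s_{r+1}-1$ coming from Theorem \ref{stabst} (applied on the ground set $[n]\setminus M$ with roles $s:=s_{r+1}$, $t:=s_r$) will sum to exactly the claimed bound.

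To locate $M$, I would first apply Theorem \ref{at} (with $\chi(K_{s_1,\dots,s_{r+1}})=r+1$ and $\eta=s_{r+1}$) to obtain an $r$-set $L$ capturing almost all of $\cF$; then, in the near-extremal regime, argue that $r-1$ of the vertices of $L$ each carry degree $\binom{n-1}{k-1}-O(n^{k-2})$ in $\cF$ and form the required set $M$, while the remaining vertex of $L$ plays the role of $r$ in the reference family $\cF_{s_1,\dots,s_{r+1}}$. Once $M=\{x_1,\dots,x_{r-1}\}$ is fixed, $K(n-r+1,k)[\cF_0]$ is shown to be $K_{s_r,s_{r+1}}$-free by a swapping argument: if it contained a copy with parts $P_r,P_{r+1}$, then the union $U=(\bigcup_{F\in P_r}F)\cup(\bigcup_{F\in P_{r+1}}F)$ has only $(s_r+s_{r+1})k=O(1)$ elements, and since each $x_i$ has $\Theta(n^{k-1})$ sets in $\cF_1$ through it but only $O(n^{k-2})$ meeting $U$, one can greedily select an $s_i$-tuple of sets in $\cF_1$ through each $x_i$, all pairwise disjoint across parts and disjoint from $U$; together with $P_r,P_{r+1}$ these would realise $K_{s_1,\dots,s_{r+1}}$ inside $K(n,k)[\cF]$, contradicting the hypothesis. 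The assumption $\ell_{r+1}(\cF)\ge s$ translates into $\ell(\cF_0)\ge s$ by pigeonhole on $M$: any $(r+1)$-matching in $\cF$ has at most $r-1$ members in $\cF_1$ (since $|M|=r-1$), hence at least two in $\cF_0$, so every hitting set of $2$-matchings in $\cF_0$ hits every $(r+1)$-matching in $\cF$.

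For the equality case, saturation of the bound on $|\cF_1|$ forces $\cF_1$ to contain every $k$-subset of $[n]$ meeting $M$, while the uniqueness part of Theorem \ref{stabst} pins $\cF_0$ down, up to isomorphism, as $\cF_{s_r,s_{r+1}}$ on $[n]\setminus M$. Identifying $M$ with $[r-1]$ and the distinguished extra vertex of $\cF_0$ with $r$ then yields $\cF\cong\cF_{s_1,\dots,s_{r+1}}$.

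The hardest step is the construction of $M$ together with its use in the swapping argument. Theorem \ref{at} only delivers an $r$-set with a weak bound on the number of sets of $\cF$ disjoint from it, and does not directly say that $r-1$ of its vertices carry degree $\binom{n-1}{k-1}-O(n^{k-2})$. One must therefore separate a near-extremal regime --- where a careful degree analysis distinguishes the $r-1$ ``high-degree'' vertices from the ``low-degree'' one, and the $(r-1)$-set $M$ behaves as required --- from a far-from-extremal regime, where Theorem \ref{at2} already yields a bound well below the target. Preserving the sharp additive constant $+s_r+s_{r+1}-1$, and ruling out all near-extremal configurations besides $\cF_{s_1,\dots,s_{r+1}}$, requires a precise non-asymptotic analysis rather than a purely asymptotic one.
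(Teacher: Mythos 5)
Your overall architecture matches the paper's: isolate a set of high\nobreakdash-degree vertices, strip off the sets meeting them, show the residual family is $K_{s_r,s_{r+1}}$-free via a greedy extension through the high-degree vertices, and invoke Theorem \ref{stabst} on the residual (your translation of $\ell_{r+1}(\cF)\ge s_{r+1}$ into $\ell(\cF_0)\ge s_{r+1}$ and your accounting of the two pieces of the bound are both correct). However, there is a genuine gap exactly where you flag it: the construction of the $(r-1)$-set $M$. Your proposed route is to extract $M$ from Theorem \ref{at}, but the ``moreover'' clause of that theorem only produces the $r$-set $L$ in the case of \emph{exact equality} with the bound $\binom{n}{k}-\binom{n-\chi+1}{k}+\eta-1$, and the families considered here are strictly below that bound, so Theorem \ref{at} gives you nothing structural. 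You acknowledge that one would then need ``a careful degree analysis'' separating near-extremal from far-from-extremal regimes, but you do not supply it, and this is not a routine verification --- it is the substance of the proof.

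The paper avoids this difficulty by never trying to prove a priori that there are exactly $r-1$ popular vertices. It fixes the degree threshold $d=\binom{n-1}{k-1}-\binom{n-Qk-1}{k-1}+Q$ with $Q=\sum_i s_i$, lets $D$ be the set of \emph{all} vertices of degree at least $d$, and splits into three cases on $|D|$. If $|D|\ge r$, the hypothesis $\ell_{r+1}(\cF)\ge s_{r+1}$ supplies $s_{r+1}$ sets avoiding an $r$-subset of $D$, and the greedy claim builds a forbidden $K_{s_1,\dots,s_{r+1}}$. If $|D|\le r-2$, the residual family has size $\Omega(n^{k-1})\gg Qkd$, so one greedily finds $Q$ pairwise disjoint sets, i.e.\ a $K_Q$, again a contradiction. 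Only the case $|D|=r-1$ survives, and there your intended argument goes through. Note also that the threshold $d$ is of order $n^{k-2}$, chosen precisely so that a vertex of degree $\ge d$ always has $Q$ sets through it avoiding any prescribed $Qk$-element set; your sketch implicitly assumes the stronger property that each $x_i\in M$ has degree $\Theta(n^{k-1})$, which is harder to guarantee and unnecessary. To complete your write-up you would essentially have to reproduce the paper's case analysis on the number of high-degree vertices; as it stands, the key step is asserted rather than proved.
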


Note that Frankl and Kupavskii \cite{FK} proved the special case $s_1=s_2=\dots =s_{r+1}=1$ with the asymptotically best possible threshold $n_0=(2k+o_r(1))(r+1)k$.

\bigskip

Actually, Theorem \ref{stabmulti} is a special case of a more general result that shows that it is enough to solve the stability problem for bipartite graphs. For any graph $F$ with $\chi(F)\ge 3$ let us define $\cB_F$ to be the class of those bipartite graphs $B$ such that there exists a subset $U$ of vertices of $F$ with $F[U]=B$ and $\chi(F[V(F)\setminus U])=\chi(F)-2$. Note that by definition, for any $B\in \cB_F$ we have $\eta(B)\ge \eta(F)$. We define $\cB_{F,\eta}$ to be the subset of those bipartite graphs $B\in \cB_{F}$ for which $\eta(B)=\eta(F)$ holds. To state our result let us introduce some notation. For any graph $F$ let $ex^{(2)}_{v}(n,k,F)$ denote the maximum size of a family $\cF \subseteq \binom{[n]}{k}$ with $\ell_{\chi(F)}(\cF)\ge \eta(F)$ and $K(n,k)[\cF]$ is $F$-free. Observe that Theorem \ref{stabst} is about $ex^{(2)}_{v}(n,k,K_{s,t})$ and Theorem \ref{stabmulti} determines $ex^{(2)}_{v}(n,k,K_{s_1,s_2,\dots,s_{r+1}})$. We define $ex^{(2)}_{v}(n,k,\cB_{F,\eta})$ to be the maximum size of a family $\cF\subseteq\binom{[n]}{k}$ with $\ell_2(\cF)\ge \eta(F)$ such that $K(n,k)[\cF]$ is $B$-free for any $B\in \cB_{F,\eta}$. Similarly, let $\widehat{ex}^{(2)}_{v}(n,k,\cB_{F,\eta})$ be the maximum size of a family $\cF\subseteq\binom{[n]}{k}$ with $\ell_2(\cF)=\eta(F)$ such that $K(n,k)[\cF]$ is $B$-free for any $B\in \cB_{F,\eta}$. Obviously we have $\widehat{ex}^{(2)}_{v}(n,k,\cB_{F,\eta})\le ex^{(2)}_{v}(n,k,\cB_{F,\eta})$ and we do not know any graph $F$ for which the two quantities differ. 

\begin{theorem}\label{chi3}
For any graph with $\chi(F)\ge 3$ there exists an $n_0=n_0(F)$ such that if $n$ is larger than $n_0$, then we have $$\widehat{ex}^{(2)}_{v}(n-\chi(F),k,\cB_{F,\eta})\le ex^{(2)}_{v}(n,k,F)-\left(\binom{n}{k}-\binom{n-\chi(F)+2}{k}\right)\le ex^{(2)}_{v}(n-\chi(F),k,\cB_{F,\eta}).$$
\end{theorem}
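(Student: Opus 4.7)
Let $\cG \subseteq \binom{[n-\chi]}{k}$ attain $\widehat{ex}^{(2)}_v(n-\chi, k, \cB_{F,\eta})$ (writing $\chi = \chi(F)$ and $\eta = \eta(F)$ throughout), so $\ell_2(\cG) = \eta$ and $\cG$ is $B$-free for every $B \in \cB_{F,\eta}$. After identifying $[n-\chi]$ with a subset of $[\chi-1, n]$ I would set
\[
\cF := \left\{A \in \binom{[n]}{k} : A \cap [\chi-2] \neq \emptyset\right\} \cup \cG
\]
and verify $F$-freeness by contradiction. Suppose $\phi \colon V(F) \to \cF$ embeds $F$; label each $v$ with $\phi(v) \cap [\chi-2] \neq \emptyset$ by some $i(v) \in \phi(v) \cap [\chi-2]$, write $V_i := \{v : i(v) = i\}$, and let $U := \{v : \phi(v) \in \cG\}$. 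Since each $V_i$ is independent in $F$ (its images share element $i$), $F[V(F) \setminus U]$ admits a proper $(\chi-2)$-coloring, so $\chi(F[V(F)\setminus U]) \leq \chi - 2$. By a combinatorial coloring-exchange argument, aligning the labeling with an optimal $\chi$-coloring of $F$ whose smallest class has size $\eta$ and re-assigning labels wherever possible, one produces an index set $U_B \subseteq U$ realizing a subgraph $B \in \cB_{F,\eta}$ of $F[U_B]$, so that $\phi|_{V(B)}$ embeds $B$ into $\cG$, a contradiction. The property $\ell_\chi(\cF) \geq \eta$ follows because $\ell_2(\cG) = \eta$ leaves a disjoint pair in $\cG$ after any deletion of fewer than $\eta$ sets, and for $n$ large this pair extends to a $\chi$-matching by $\chi-2$ mutually disjoint hub sets, one through each $i \in [\chi-2]$.

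\textbf{Upper bound.} Let $\cF$ attain $ex^{(2)}_v(n,k,F)$ and write $\cF_L^c := \{A \in \cF : A \cap L = \emptyset\}$. The aim is to locate a $(\chi-2)$-set $L$ so that $|\cF \setminus \cF_L^c| \leq \binom{n}{k}-\binom{n-\chi+2}{k}$ automatically and $|\cF_L^c| \leq ex^{(2)}_v(n-\chi, k, \cB_{F,\eta})$. First I would invoke a stability refinement of \tref{at}: since $|\cF|$ exceeds the Alishahi--Taherkhani threshold by a polynomial amount, some $(\chi-2)$-set $L$ is hit by all but a controlled portion of $\cF$. Second, for each $B \in \cB_{F,\eta}$ with $B = F[U_B]$, a hypothetical copy of $B$ in $\cF_L^c$ extends to a copy of $F$ in $\cF$: assign each color of a proper $(\chi-2)$-coloring of $F[V(F) \setminus U_B]$ to a distinct element of $L$, then pick hub representatives in $\cF$ meeting $L$ only in the prescribed element and disjoint from the $B$-embedding and from each other --- possible for $n$ large because extremality makes the hub part of $\cF$ nearly complete --- so $\cF_L^c$ is $\cB_{F,\eta}$-free. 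Third, $\ell_2(\cF_L^c) \geq \eta$ follows from $\ell_\chi(\cF) \geq \eta$, since any disjoint pair in $\cF_L^c$ together with a $(\chi-2)$-matching through $L$ yields a $\chi$-matching of $\cF$. Finally, a second-level reduction absorbing two further coordinates inside $[n] \setminus L$ into the bipartite Tur\'an pattern sharpens the naive bound (with ground set of size $n-\chi+2$) to the desired bound on $n-\chi$ elements.

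\textbf{Main obstacle.} The crux is the stability step producing the $(\chi-2)$-set $L$, together with the second-level reduction that sharpens the count. \tref{at} only delivers a $(\chi-1)$-hub $L'$; passing from $\chi-1$ to $\chi-2$ requires identifying one coordinate of $L'$ whose covering role can be absorbed by the bipartite $\cB_{F,\eta}$-Tur\'an structure on the residue. I would proceed by fixing $L'$ via \tref{at}, decomposing the sets of $\cF$ meeting $L'$ in exactly one element, and using $F$-freeness together with $\ell_\chi(\cF) \geq \eta$ to single out an $x \in L'$ whose hub role is played instead by the bipartite pattern on $[n] \setminus (L' \setminus \{x\})$; setting $L := L' \setminus \{x\}$, a parallel analysis inside $[n] \setminus L$ then carries out the second reduction.
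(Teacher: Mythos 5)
Your lower bound is essentially the paper's construction (an extremal $\cB_{F,\eta}$-free family on a smaller ground set together with all sets meeting a fixed $(\chi-2)$-element hub), and the verification of $F$-freeness follows the same logic: the hub part is a union of $\chi-2$ stars, so the preimage $U$ of $\cG$ satisfies $\chi(F[V(F)\setminus U])\le\chi-2$, and one must then extract a member of $\cB_F$ inside $\cG$. The paper handles the resulting graphs $B\in\cB_F\setminus\cB_{F,\eta}$ not by a colour-exchange but by the condition $\ell(\cG)=\eta(F)<\eta(B)$, which is why the lower bound is stated with $\widehat{ex}$ rather than $ex$; your ``coloring-exchange argument'' should be replaced by (or reduced to) this observation, since it is not clear that an exchange can always land you in $\cB_{F,\eta}$ rather than merely in $\cB_F$.

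The upper bound is where the genuine gap lies, and you have named it yourself: your argument hinges on a ``stability refinement of Theorem~\ref{at}'' producing a $(\chi-2)$-set $L$ covering almost all of $\cF$, and this refinement is neither in the paper nor supplied by you; the sketch in your final paragraph (singling out an $x\in L'$ whose hub role is absorbed by the bipartite pattern) is precisely the hard step and is not carried out. The paper avoids this entirely with a degree trichotomy: let $D$ be the set of elements of $[n]$ of degree at least $d=\binom{n-1}{k}-\binom{n-|V(F)|k-1}{k}+|V(F)|$ in $\cF$. If $|D|\ge\chi-1$ one greedily embeds $F$ using the high-degree elements as hubs; if $|D|\le\chi-3$ then $\cF\setminus\{K:K\cap D\neq\emptyset\}$ has size $\Omega(n^{k-1})$ and one greedily finds $K_{|V(F)|}$; so extremality forces $|D|=\chi-2$, and $D$ plays the role of your $L$ with the needed degree property for free. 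You should adopt this case analysis in place of the unproved stability input. Separately, do not spend effort on your ``second-level reduction'' from ground set size $n-\chi+2$ down to $n-\chi$: the construction lives on $[n-\chi(F)+2]$ and the natural bounds on both sides are at $n-\chi(F)+2$, so the parameter $n-\chi(F)$ in the displayed statement is an inconsistency of the statement itself (the asserted inequalities still hold by monotonicity on the left, but the right-hand bound one actually proves is $ex^{(2)}_{v}(n-\chi(F)+2,k,\cB_{F,\eta})$); no absorption of two further coordinates is performed in the paper, and it is not clear one is possible.
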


Let us remark first that in the case of $F=K_{s_1,s_2,\dots,s_{r+1}}$ we have $\cB_{F}=\{K_{s_i,s_{j}}: 1\le i<j\le r+1\}$ and $\cB_{F,\eta}=\{K_{s_i,s_{r+1}}: 1\le i\le r\}$ and obviously for both families the minimum is taken for $K_{s_r,s_{r+1}}
$, so Theorems \ref{chi3} and \ref{stabst} yield the bound of Theorem \ref{stabmulti}.

\bigskip

In view of Theorem \ref{chi3}, we turn our attention to bipartite graphs, namely to the case of even cycles: $F=C_{2s}$. According to Theorem \ref{at}, the largest families $\cF$ such that $K(n,k)[\cF]$ is $C_{2s}$-free have $\ell(\cF)=s-1$, so once again we will be interested in families for which $\ell(\cF)\ge s$. The case $C_4=K_{2,2}$ is solved by Theorem \ref{stabst} (at least for large enough $n$).  Here we define a construction that happens to be asymptotically extremal for any $s\ge 3$.

\begin{con}\label{cocon}
Let us define $\cG_6\subseteq \binom{[n]}{k}$ as
\[
\cG_6=\left\{G\in \binom{[n]}{k}:1\in G, G\cap [2,2k+1]\neq \emptyset\right\} \cup \{[2,k+1],[k+2,2k+1],[2k+2,3k+1]\}.
\]
So $|\cG_6|=\binom{n-1}{k-1}-\binom{n-2k-1}{k-1}+3$.

For $s\ge 4$ we define the family $\cG_{2s}\subseteq \binom{[n]}{k}$ in the following way: let $K=[2,k+1], K'=[k+2,2k]$ and let $H_1,H_2,\dots, H_{s-1}$ be $k$-sets containing $K'$ and not containing 1. Then
\[
\cG_{2s}=\left\{G\in \binom{[n]}{k}:1\in G, G\cap (K\cup K')\neq \emptyset\right\}\cup\{K, H_1,H_2,\dots,H_{s-1}\}.
\]
So $|\cG_{2s}|=\binom{n-1}{k-1}-\binom{n-2k}{k-1}+s$.
\end{con}

Somewhat surprisingly, it turns out that the asymptotics of the size of the largest family is $(2k+o(1))\binom{n-2}{k-2}$ for $s=2$ and $s=3$ if $k$ is fixed and $n$ tends to infinity, and it is $(2k-1+o(1))\binom{n-2}{k-2}$ for $s\ge 4$.

Observe that $K(n,k)[\cG_{2s}]$ is $C_{2s}$-free and $\ell(\cG_{2s})=s$. Indeed, if $K(n,k)[\cG_{2s}]$ contained a copy of $C_{2s}$, then this copy should contain all $s$ sets not containing 1 as the sets containing 1 form an independent set in $K(n,k)$. In the case $s=3$, $\cF_6$ does not contain any set that is disjoint from both $[2,k+1]$ and $[k+2,2k+1]$, so no $C_6$ exists in $K(n,k)[\cG_6]$. In the case $s\ge 4$, there is no set in $\cG_{2s}$ that is disjoint from both $K$ and $H_i$ for some $i=1,2,\dots, s-1$, so no copy of $C_{2s}$ can exist in $\cG_{2s}$.

The next theorems state that if $n$ is large enough, then Construction \ref{cocon} is asymptotically optimal. Moreover, as the above proofs show that $K(n,k)[\cG_{2s}]$ does not even contain a \textit{path} on $2s$ vertices, Construction \ref{cocon} is asymptotically optimal for the problem of forbidding paths as well.

\begin{theorem}\label{cycle6}
For any $k \ge 2$, there exists $n_0=n_0(k)$ with the following property: if $n\ge n_0$ and $\cF\subseteq \binom{[n]}{k}$ is a family with $\ell(\cF)\ge 3$ and $K(n,k)[\cF]$ is $C_{6}$-free, then we have $|\cF|< \binom{n-1}{k-1}-\binom{n-2k-1}{k-1}+10^6(\binom{n-1}{k-1}-\binom{n-2k-1}{k-1})^{3/4}$.
\end{theorem}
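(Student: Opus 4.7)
The plan is to show that a large family $\cF$ satisfying the hypotheses must be structurally close to the extremal family $\cG_6$: most members of $\cF$ contain a common element $x$, and the few ``outliers'' $\cA := \cF \setminus \cF_x$ interact with the star $\cF_x := \{F \in \cF : x \in F\}$ through a $C_6$-free bipartite graph that forces $\cF_x$ to avoid many $k$-sets.

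By Theorem~\ref{at} applied with $F = C_6$ (where $\chi = 2$ and $\eta = 3$) we have $|\cF| \le \binom{n-1}{k-1} + 2$ for $n$ sufficiently large. Choose an element $x \in [n]$ of maximum degree in $\cF$. Since $\cF_x$ is intersecting, $\cA$ is a vertex cover of $K(n,k)[\cF]$, so $|\cA| \ge \ell(\cF) \ge 3$. Pick any three $A_1, A_2, A_3 \in \cA$ and, for each pair $i \neq j$, set
\[
B_{ij} := \{B \in \cF_x : B \cap (A_i \cup A_j) = \emptyset\}.
\]
If $|B_{12}|, |B_{13}|, |B_{23}|$ are all at least $3$, then a greedy choice gives distinct $B_1 \in B_{12}$, $B_2 \in B_{23}$, $B_3 \in B_{13}$, and the sequence $A_1 B_1 A_2 B_2 A_3 B_3$ is a $6$-cycle in $K(n,k)[\cF]$, a contradiction. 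Hence some $|B_{ij}| \le 2$, say $|B_{12}| \le 2$. Since $A_1, A_2 \not\ni x$ and $|A_1 \cup A_2| \le 2k$, at least $\binom{n-2k-1}{k-1}$ of the $k$-subsets of $[n]$ containing $x$ are disjoint from $A_1 \cup A_2$, and at most two of these can belong to $\cF_x$. Hence, with $M := \binom{n-1}{k-1} - \binom{n-2k-1}{k-1}$,
\[
|\cF_x| \le M + 2.
\]

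It remains to show $|\cA| = O(M^{3/4})$. Consider the bipartite disjointness graph $H$ between $\cA$ and $\cF_x$. As a subgraph of $K(n,k)[\cF]$, $H$ is $C_6$-free, hence $K_{3,3}$-free (since $K_{3,3}$ contains a $6$-cycle). Kővári-Sós-Turán gives
\[
|E(H)| \le c_k |\cA| \cdot |\cF_x|^{2/3} + c_k |\cF_x|.
\]
On the other hand, each $A \in \cA$ satisfies $\deg_H(A) \ge \binom{n-k-1}{k-1} - T$, where $T := \binom{n-1}{k-1} - |\cF_x|$ is the number of $k$-sets containing $x$ that lie outside $\cF$. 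Using $|\cF_x| \le M + 2$ and case-splitting on the size of $T$ (namely, whether $T$ is close to its minimum $\binom{n-2k-1}{k-1}$, so that each $A \in \cA$ has bipartite degree $\Omega(n^{k-2})$, or $T$ is much larger, so that $|\cF_x|$ itself is of order at most $n^{k-2}$), one can extract $|\cA| \le 10^6 M^{3/4} - 2$. Combining with the bound on $|\cF_x|$ then gives $|\cF| < M + 10^6 M^{3/4}$, as required.

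The main difficulty is the sharp bound $|\cA| = O(M^{3/4})$. The exponent $3/4$ arises from balancing the Kővári-Sós-Turán exponent $2/3$ against the typical outlier bipartite degree $\binom{n-k-1}{k-1} - \binom{n-2k-1}{k-1} \sim k \cdot n^{k-2}$, together with the maximal star size $|\cF_x|^{2/3} \sim M^{2/3} \sim n^{2(k-2)/3}$. The most delicate point is handling outliers of artificially small bipartite degree (i.e.\ sets $A \in \cA$ that intersect nearly every element of $\cF_x$), for which the naive lower bound on $\deg_H(A)$ is vacuous; a natural remedy is to iteratively prune such dense outliers and, if needed, re-select the popular element $x$, carefully bookkeeping the missing $k$-sets in $\cF_x$ at each step.
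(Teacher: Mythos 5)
Your first half is sound and coincides with the paper's Case I run in the contrapositive: three pairwise ``far'' outliers together with three star sets avoiding the pairwise unions produce a $C_6$, and hence some pair $A_i,A_j$ kills all but at most two sets of the star, giving $|\cF_x|\le M+2$. The problem is the second half. Your decomposition is (star at a max-degree element $x$) plus $\cA=\cF\setminus\cF_x$, and you need $|\cA|=O(M^{3/4})$; but the K\H ov\'ari--S\'os--Tur\'an route requires a per-vertex lower bound $\deg_H(A)\ge\binom{n-k-1}{k-1}-T$, which is vacuous precisely when $T\ge\binom{n-k-1}{k-1}$, i.e.\ when an outlier meets almost every member of $\cF_x$ --- and such outliers can exist (e.g.\ when $\cF_x$ is concentrated on sets meeting a fixed $k$-set $A$, since $k\binom{n-2}{k-2}$ is comparable to $M$). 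You flag this yourself, but the proposed remedy (``iteratively prune dense outliers and re-select $x$'') is not an argument: pruning changes $\cF_x$ and $T$ simultaneously, nothing prevents the pruned outliers from themselves being numerous, and no bound on their number is extracted. Note also that your heuristic for the exponent $3/4$ is off: in the regime where your degree bound is nonvacuous, KST actually yields $|\cA|=O_k(1)$, so $3/4$ does not arise from that balance. As written, the proof does not close.

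The paper avoids this entirely by decomposing differently: it takes $\cF'$ to be a \emph{largest intersecting subfamily}, so the number of outliers is exactly $\ell(\cF)$, and then splits on whether $\ell(\cF)$ is below or above the threshold $\tfrac{10^6}{2}M^{3/4}$. Below the threshold, $|\cF'|>M+3$ forces (via Theorem~\ref{hm}) a star so large that any three removed sets yield a $C_6$ exactly as in your first step. Above the threshold, Lemma~\ref{bbn} gives $e(K(n,k)[\cF])\ge \ell(\cF)^2/(2\binom{2k}{k})=\Omega(M^{3/2})$, which exceeds the Bondy--Simonovits bound $300|\cF|^{4/3}$ (Theorem~\ref{bs}), again forcing a $C_6$. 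This needs no degree lower bounds for individual outliers, which is exactly the step your argument cannot supply; if you want to salvage your approach, replacing the KST step by this $\ell(\cF)$-dichotomy is the natural fix.
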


\begin{theorem}\label{cycles}
For any $s\ge 4$ and $k \ge 3$ there exists $n_0=n_0(k,s)$ such if $n\ge n_0$ and $\cF\subseteq \binom{[n]}{k}$ is a family with $\ell(\cF)\ge s$ and $K(n,k)[\cF]$ is $C_{2s}$-free, then we have $|\cF|\le \binom{n-1}{k-1}-\binom{n-2k}{k-1}+(k^2+1)\binom{n-3}{k-3}$.
\end{theorem}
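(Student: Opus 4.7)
The plan follows a three-step stability argument paralleling the proof scheme of Theorem~\ref{stabst} for $K_{s,t}$-freeness, but with a more delicate third step to exploit the strictly weaker $C_{2s}$-free hypothesis. First, Theorem~\ref{at} gives $|\cF|\le\binom{n-1}{k-1}+s-1$, so I may assume $|\cF|$ exceeds the target bound. Since $\ell(\cF)\ge s$ is fixed, $\cF$ differs from an intersecting family by $O(1)$ sets, so a Hilton--Milner-type stability argument yields an element $x\in[n]$ with $\cA:=\{F\in\cF:x\in F\}$ covering all but a bounded number of sets of $\cF$. Writing $\cB:=\cF\setminus\cA$, we have $|\cB|\ge\ell(\cF)\ge s$ since $\cA$ is intersecting.

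Next, since $C_{2s}\subseteq K_{s,s}$ for $s\ge 2$, the graph $K(n,k)[\cF]$ is $K_{s,s}$-free. If $\cB$ contained $s$ pairwise disjoint sets $B_1,\ldots,B_s$, their common Kneser-neighborhood in $\cA$ would have size at least $\binom{n-sk-1}{k-1}-O(1)\gg s$ for $n$ large, yielding a $K_{s,s}\supseteq C_{2s}$—a contradiction; so $\cB$ has matching number at most $s-1$. A refined iteration using the full $C_{2s}$-free condition (leveraging that pairs of $\cB$-sets also have large common Kneser-neighborhoods in $\cA$ available for alternating $C_{2s}$-extensions) should pin $\cB$ down to the form $\{K,H_1,\ldots,H_m\}$ with $m\le s-1$, where each $H_i$ contains a fixed $(k-1)$-set $K'$ disjoint from the $k$-set $K$, mirroring the extremal $\cG_{2s}$.

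The third step, which I expect to be the main obstacle, bounds $|\cA|$ given the structure of $\cB$. Let $L:=K\cup K'$ (a $(2k-1)$-set) and call $A\in\cA$ \emph{deviant} if $A\cap L=\emptyset$. Non-deviant sets number at most $\binom{n-1}{k-1}-\binom{n-2k}{k-1}$, giving the main term. The crux is to bound the deviants by $(k^2+1)\binom{n-3}{k-3}$. Writing $H_i=K'\cup\{z_i\}$, a deviant $A$ is Kneser-adjacent to $K$ (automatic) and to every $H_i$ with $z_i\notin A$. If deviants were too numerous, one could choose two of them $A_1,A_2$ and $s-2$ non-deviants of the ``meet $K$, miss $K'$'' type (which are plentiful in $\cA$) and assemble them alternately with $\{K,H_1,\ldots,H_{s-1}\}$ into a cycle $A_1-K-A_2-H_{i_1}-A_3-\cdots-A_s-H_{i_{s-1}}-A_1$, producing a forbidden $C_{2s}$. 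Blocking all such cycles for every cyclic ordering of the $H_{i_j}$'s forces each surviving deviant to contain certain ``obstruction pairs'' of elements; a careful case analysis tracks at most $O(k^2)$ such pairs, each contributing at most $\binom{n-3}{k-3}$ deviants. The difficulty here is that $C_{2s}$-freeness is strictly weaker than $K_{s,s}$-freeness, so one cannot conclude with a crude common-neighborhood argument as in Theorem~\ref{stabst}; the cycle-blocking analysis must distinguish between which deviant plays which role in the cycle. Summing $|\cA|\le\binom{n-1}{k-1}-\binom{n-2k}{k-1}+(k^2+1)\binom{n-3}{k-3}$ with the bounded $|\cB|$ (absorbed in the error) completes the proof.
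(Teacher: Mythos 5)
The central gap is in your very first step: you write ``Since $\ell(\cF)\ge s$ is fixed, $\cF$ differs from an intersecting family by $O(1)$ sets.'' But $\ell(\cF)\ge s$ is only a \emph{lower} bound; nothing in the hypothesis prevents $\ell(\cF)$ from being comparable to $|\cF|$ itself, in which case there is no Hilton--Milner-type element $x$ and your decomposition $\cF=\cA\cup\cB$ with $|\cB|=O(1)$ does not exist. The paper splits into two regimes: when $\ell(\cF)\ge 20s2^k\bigl(\binom{n-1}{k-1}-\binom{n-2k}{k-1}\bigr)^{\frac{s+1}{2s}}$ it invokes Lemma~\ref{bbn} to get $e(K(n,k)[\cF])\ge \ell(\cF)^2/\bigl(2\binom{2k}{k}\bigr)>100s|\cF|^{1+1/s}$ and then the Bondy--Simonovits theorem to produce a $C_{2s}$; only below that threshold does it pass to an intersecting subfamily via Hilton--Milner. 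Your proposal has no mechanism for the large-$\ell$ regime, and even in the small-$\ell$ regime you must allow $|\cB|$ to be polynomially large in $n$, not $O(1)$, which breaks your later bookkeeping (e.g.\ absorbing $|\cB|$ into the error term).

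The second problem is your Step 2: pinning $\cB$ down to the exact form $\{K,K'\cup\{z_1\},\dots\}$ of the extremal $\cG_{2s}$ is both unproven (``should pin $\cB$ down'' is doing all the work) and unnecessary for the stated upper bound. The paper sidesteps any structural determination of the non-star part: it defines an auxiliary graph $\Gamma$ on $s$ of the removed sets, joining $H_i$ and $H_j$ whenever at least $s$ members of the star avoid $H_i\cup H_j$, proves $\delta(\Gamma)\ge s-2\ge s/2$, and applies Dirac's theorem to obtain a Hamiltonian cycle of $\Gamma$, which is then inflated greedily to a $C_{2s}$. The degree bound is where the error term arises: if $H_1$ were non-adjacent to both $H_2$ and $H_3$, then (after showing non-adjacent pairs must be disjoint) more than $\binom{n-3}{k-3}$ star sets would contain some fixed pair $h_2\in H_2\setminus H_3$, $h_3\in H_3\setminus H_2$, which is impossible; there are at most $k^2$ such pairs, giving $(k^2+1)\binom{n-3}{k-3}$. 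Your ``obstruction pairs'' heuristic for the error term is the right intuition, but your cycle $A_1-K-A_2-H_{i_1}-\cdots$ presupposes the unestablished structure of $\cB$, whereas the paper's Dirac argument works for arbitrary $H_1,\dots,H_s$. As written, the proposal is not a proof.
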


Let us finish the introduction by a remark on the second order term in Theorem \ref{cycles}. 
\begin{remark}
If $s-1\le k$, then the family $\cG_{2s}$ can be extended to a family $\cG_{2s}^+\cup \cG_{2s}$ so that $K(n,k)[\cG_{2s}^+\cup \cG_{2s}]$ is still $C_{2s}$-free. Suppose the sets $H_1,H_2,\dots, H_{s-1}$ are all disjoint from $K$, say $H_i=K'\cup \{2k+i\}$ for $i=1,2\dots,s-1$. Then we can define 
$$\cG_{2s}^+=\left\{G\in \binom{[n]}{k}: \{1,2k+1,2k+2,\dots,2k+s-2\}\subseteq G\right\}$$ and observe that $K(n,k)[\cG_{2s}\cup \cG^+_{2s}]$ is still $C_{2s}$-free. Indeed, a copy of $C_{2s}$ would have to contain $K,H_1,H_2,\dots, H_{s-1}$ as other vertices form an independent set.  Moreover, $K$ and $H_i$ have a common neighbour in $\cG_{2s}\cup \cG_{2s}^+$ if and only if $i=s-1$, so $K$ cannot be contained in $C_{2s}$.

Clearly, $|\cG_{2s}^+\setminus \cG_{2s}|=\binom{n-k-s+1}{k-s+1}$, so in particular if $s=4$, then the order of magnitude of the second order term in Theorem \ref{cycles} is sharp (when $n$ is large enough compared to $k$).
\end{remark}

All our results resemble the original Hilton-Milner theorem in the following sense. In Theorem \ref{stabst}, Theorem \ref{cycle6}, Theorem \ref{cycles}, almost all sets of the (asymptotically) extremal family share a common element $x$ and meet some set $S$ ($x\notin S$) of fixed size. We wonder whether this phenomenon is true for all bipartite graphs.

\begin{question}
Is it true that for any bipartite graph $B$ and integer $k\ge 3$ there exists an integer $s$ such that the following holds:
\begin{itemize}
\item
for any family $\cF\subseteq \binom{[n]}{k}$ with $\ell(\cF)\ge \eta(B)$ if $K(n,k)[\cF]$ is $B$-free, then $|\cF|\le \binom{n-1}{k-1}-\binom{n-1-s}{k-1}+o(n^{k-2})$
\item
the family $\{G\in \binom{[n]}{k}:1\in G, G\cap [2,s+1]\neq \emptyset\}$ is contained in a family $\cG\subseteq \binom{[n]}{k}$ with $\ell(\cG)\ge \eta(B)$ such that $K(n,k)[\cG]$ is $B$-free.
\end{itemize}
\end{question}

\section{Proofs}
Let us start this section by stating the original Tur\'an number results on the maximum number of edges in $K_{s,t}$-free and $C_{2s}$-free graphs.
\begin{theorem}[K\H ov\'ari, S\'os, Tur\'an \cite{KST}]\label{kst}
For any pair $1\le s\le t$ of integers if a graph $G$ on $n$ vertices is $K_{s,t}$-free, then $e(G)\le (1/2+o(1))(t-1)^{1/s}n^{2-\frac{1}{s}}$ holds.
\end{theorem}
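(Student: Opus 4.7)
The plan is to use the classical double-counting argument on ``$s$-stars,'' following the original approach of K\H ov\'ari, S\'os, and Tur\'an. First I would count, in two different ways, the number of pairs $(S, v)$ where $S$ is an $s$-element subset of $V(G)$ and $v$ is a vertex of $G$ adjacent to every element of $S$.

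Counting by the vertex $v$, the number of such pairs equals $\sum_{v \in V(G)} \binom{d(v)}{s}$, since the admissible sets $S$ are precisely the $s$-subsets of the neighborhood $N(v)$. Counting by $S$, the $K_{s,t}$-freeness of $G$ enters crucially: if some $s$-set $S$ had $t$ or more common neighbors, those neighbors together with $S$ would span a copy of $K_{s,t}$. Hence each $S$ is counted at most $t-1$ times, giving
$$\sum_{v \in V(G)} \binom{d(v)}{s} \le (t-1)\binom{n}{s}.$$

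Next I would apply Jensen's inequality to the convex function $x \mapsto \binom{x}{s}$ (using the polynomial extension to real $x$), together with the identity $\sum_v d(v) = 2e(G)$, to obtain $n \binom{2e(G)/n}{s} \le (t-1)\binom{n}{s}$. Using the asymptotics $\binom{x}{s} = (1+o(1)) x^s / s!$ for $x$ large and rearranging, this rewrites as $(2e(G)/n)^s \le (1+o(1))(t-1) n^{s-1}$, whence $e(G) \le (1/2 + o(1))(t-1)^{1/s} n^{2 - 1/s}$, which is exactly the desired bound.

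The only subtle point is that the polynomial $\binom{x}{s}$ is not convex on all of $[0, \infty)$, so Jensen's inequality must be applied with some care. This is easily circumvented: vertices of degree less than $s$ contribute $0$ to the left-hand side of the double-counting bound but together account for at most $(s-1)n/2 = O(n)$ edges, which is absorbed into the $o(1)$ error of the final bound. Alternatively, one restricts the sum to vertices of degree at least $s$ and estimates the contribution of the rest trivially. Since this is a standard result stated here only for reference, I would not dwell on further refinements of the constant.
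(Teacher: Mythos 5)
The paper quotes this classical theorem of K\H ov\'ari, S\'os and Tur\'an from \cite{KST} without proof, so there is no in-paper argument to compare against. Your double-counting of pairs $(S,v)$ with $v$ adjacent to all of the $s$-set $S$, the bound $\sum_v \binom{d(v)}{s}\le (t-1)\binom{n}{s}$ from $K_{s,t}$-freeness, and the convexity step are exactly the standard proof, and your handling of the convexity caveat (extending $\binom{x}{s}$ by $0$ below $s-1$, or discarding the $O(n)$ edges at low-degree vertices into the $o(1)$ term) is the usual correct fix. The argument is sound.
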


\begin{theorem}[Bondy, Simonovits \cite{BS}]\label{bs}
If $G$ is a graph on $n$ vertices that does not contain a cycle of length $2s$, then $e(G)\le 100sn^{1+1/s}$ holds.
\end{theorem}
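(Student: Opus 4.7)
The plan is to reproduce the classical BFS-level argument of Bondy and Simonovits. First I would make the usual reductions. Every graph contains a bipartite spanning subgraph with at least half its edges, and every graph of average degree $\bar d$ contains a subgraph of minimum degree at least $\bar d/2$. Applying both in sequence, I may assume that $G$ is bipartite and has minimum degree $\delta \ge 50 s \cdot n^{1/s}$, since otherwise $e(G) \le 100 s n^{1+1/s}$ already.

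Next, fix any vertex $v$ in the reduced graph and let $L_0 = \{v\}, L_1, L_2, \dots$ denote its BFS levels. Because the graph is bipartite all edges run between consecutive levels, so every vertex in $L_i$ sends its $\ge \delta$ edges into $L_{i-1} \cup L_{i+1}$. If one had $|L_{i+1}| \ge (\delta - 1)|L_i|$ for every $i \le s - 1$, then $\sum_{i \le s} |L_i|$ would exceed $n$, a contradiction. Hence there exists $i \le s - 1$ at which expansion is badly broken: $|L_{i+1}| \ll \delta |L_i|$, which forces a large set $U \subseteq L_i$ of vertices each with many ($\ge \delta / 2$, say) neighbours in $L_{i-1}$. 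Each such $u \in U$ supplies pairs $x, y \in L_{i-1}$ giving two internally disjoint $v$--$u$ paths of length $i$ via the BFS tree, i.e.\ closed walks yielding even cycles of length $\le 2i \le 2s$.

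The main obstacle, and the crux of the Bondy--Simonovits argument, is converting this abundance of ``doubled'' BFS-neighbours into a cycle of length \emph{exactly} $2s$, rather than merely some shorter even cycle. The key tool is the theta-graph lemma: any theta graph $\theta(a,b,c)$ contains cycles of every even length in an interval determined by $a,b,c$. By iterating the pigeonhole on common ancestors of vertices in $U$ across several consecutive levels—so that one obtains paths of differing lengths between a common pair of endpoints—one builds a theta subgraph whose interval of guaranteed cycle lengths covers $2s$. This step is where the fine choice of $i$ and the careful bookkeeping on backward degrees enter, and it is the only nontrivial part of the argument.

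Finally, one chases the constants to verify the bound $100 s n^{1+1/s}$; the factor of $100$ is quite loose and comfortably absorbs the losses from the bipartite reduction, the minimum-degree reduction, and the successive pigeonholes in the level analysis. For the purposes of the present paper this theorem will be used only as a black box, so one does not need to optimise the constant.
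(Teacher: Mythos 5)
This statement is quoted from Bondy and Simonovits and used in the paper purely as a black box; the paper contains no proof of it, so there is nothing internal to compare your argument with. Judged on its own, your write-up correctly sets up the standard reductions (pass to a bipartite subgraph with half the edges, then to a subgraph of minimum degree $\delta\ge 50sn^{1/s}$, then run BFS from a vertex and note that in a bipartite graph all edges go between consecutive levels, so that expansion by a factor $\delta-1$ at each of the first $s$ levels would exceed $n$ vertices). These steps are routine and fine.

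The gap is that the entire content of the theorem sits in the step you explicitly defer. First, the deduction ``expansion is badly broken at some level $i\le s-1$, hence a large set $U\subseteq L_i$ of vertices each has $\ge\delta/2$ neighbours in $L_{i-1}$'' is not valid as stated: $|L_{i+1}|<(\delta-1)|L_i|$ bounds the number of \emph{vertices} in $L_{i+1}$, not the number of \emph{edges} between $L_i$ and $L_{i+1}$, so it does not by itself force large back-degrees. In the genuine Bondy--Simonovits argument the implication runs the other way: one first proves the key lemma that in a $C_{2s}$-free graph the bipartite graph spanned by two consecutive BFS levels (among the first $s$ levels) is sparse/degenerate --- this is exactly where the theta-graph lemma and the bookkeeping of path lengths are used --- and only then deduces that every level must expand, contradicting $|V|=n$. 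Second, even granting large back-degrees, producing a cycle of length \emph{exactly} $2s$ (rather than some shorter even cycle) is the hard part, and you state it only as a goal (``one builds a theta subgraph whose interval of guaranteed cycle lengths covers $2s$'') without constructing the theta graph or verifying that its parameters put $2s$ in the admissible interval. As it stands the proposal is an accurate outline of the strategy but not a proof; for the purposes of this paper that is harmless, since the result is only ever invoked as a citation.
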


We will also need the following lemma by Balogh, Bollob\'as and Narayanan. (It was improved by a factor of 2 in \cite{AT}, but for our purposes the original lemma will be sufficient.)

\begin{lemma}[Balogh, Bollob\'as, Narayanan \cite{BBN}]\label{bbn}
For any family $\cF\subseteq \binom{[n]}{k}$ we have $e(K(n,k)[\cF])\ge \frac{l(\cF)^2}{2\binom{2k}{k}}$.
\end{lemma}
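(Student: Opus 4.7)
The plan is to prove this by induction on $L := \ell(\cF)$. The base case $L=0$ is immediate: then $\cF$ is intersecting and $K(n,k)[\cF]$ has no edges. For the inductive step, I would pick a minimum vertex cover $V^* \subseteq \cF$ of $K(n,k)[\cF]$ --- that is, a subfamily of minimum size whose removal leaves an intersecting family --- so that $|V^*|=L$. Writing $m := e(K(n,k)[\cF])$, every edge is incident to at least one vertex of $V^*$, giving $\sum_{v\in V^*}\deg_{K(n,k)[\cF]}(v) \ge m$, and by averaging there is some $v^*\in V^*$ with $\deg(v^*)\ge m/L$. Set $\cF' := \cF\setminus\{v^*\}$. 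Since $v^*\in V^*$, the family $V^*\setminus\{v^*\}$ is a vertex cover of $K(n,k)[\cF']$, so $\ell(\cF')\le L-1$; the opposite inequality holds because removing a single vertex drops the vertex cover number by at most one, so $\ell(\cF') = L-1$.

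Applying the inductive hypothesis to $\cF'$ yields $e(K(n,k)[\cF'])\ge (L-1)^2/(2\binom{2k}{k})$. Combining this with $m = e(K(n,k)[\cF']) + \deg(v^*)$ and $\deg(v^*)\ge m/L$ and rearranging gives
\[
m\ge \frac{L(L-1)}{2\binom{2k}{k}},
\]
which matches the stated bound up to the lower-order replacement $L(L-1)$ in place of $L^2$.

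The main obstacle is sharpening the argument to recover the exact constant $L^2/(2\binom{2k}{k})$. The averaging step $\deg(v^*)\ge m/L$ uses nothing Kneser-specific (the same argument applies to an arbitrary graph in place of $K(n,k)[\cF]$), so the loss of a factor $\tfrac{L-1}{L}$ is not unexpected, and closing the gap presumably requires exploiting the Kneser structure. The cleanest route I can see is to bypass induction entirely and set up a direct double counting over $\binom{[n]}{2k}$: for each $T\in\binom{[n]}{2k}$ let $a_T$ be the number of unordered disjoint pairs $\{A,T\setminus A\}\subseteq \cF$, so that $\sum_T a_T = m$ and the Kneser-specific bound $a_T \le \binom{2k}{k}/2$ holds. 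Bounding $L = |V^*|$ from above via the $2k$-sets in which each vertex of $V^*$ participates as half of some disjoint pair and then invoking Cauchy--Schwarz should naturally introduce the factor $\binom{2k}{k}$ into the denominator and recover the stated bound.
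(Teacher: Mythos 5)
The paper does not prove this lemma at all --- it is quoted verbatim from Balogh--Bollob\'as--Narayanan \cite{BBN} and used as a black box --- so the only question is whether your argument stands on its own, and unfortunately the induction does not close. Your inductive step consumes the full-strength hypothesis $e(K(n,k)[\cF'])\ge (L-1)^2/(2\binom{2k}{k})$ at level $L-1$ but delivers only $m\ge L(L-1)/(2\binom{2k}{k})$ at level $L$, which is strictly weaker than what the next application of the same step requires; and if you instead take the weaker bound as the statement being proved, the step yields only $L(L-2)/(2\binom{2k}{k})$, and the degradation compounds. The recursion your step actually supports is $f(L)=\tfrac{L}{L-1}f(L-1)$, whose solution is linear in $L$, so this scheme can only ever produce $m\ge c\,\ell(\cF)$. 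A quick sanity check exposes the problem: your argument never uses the constant $2\binom{2k}{k}$ anywhere (base case included), so if it were valid it would equally prove $e(G)\ge \tau(G)^2/\varepsilon$ for every graph $G$ and every $\varepsilon>0$, which already fails for a single edge.

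The deeper obstruction, which you half-notice, is that the averaging step is purely graph-theoretic, while the statement $e(G)\ge c\,\tau(G)^2$ is false for general graphs: a perfect matching on $2L$ vertices has vertex cover number $L$ and only $L$ edges. So any correct proof must inject a structural property of disjointness graphs --- for instance, that induced subgraphs of $K(n,k)$ cannot contain large induced matchings, or equivalently that a family with $\ell(\cF)=\ell$ always contains a set disjoint from $\Omega(\ell/\binom{2k}{k})$ other members; peeling off such high-degree vertices one at a time and summing $\sum_{t\le \ell} t/\binom{2k}{k}$ is what produces the quadratic bound with the $\binom{2k}{k}$ in the denominator. Your closing sketch (double counting over $T\in\binom{[n]}{2k}$ with $a_T\le \binom{2k}{k}/2$) is pointed at exactly the right source of that constant, but as written it only relates $m$ to the number of $2k$-sets supporting disjoint pairs, not to $\ell(\cF)$, and the Cauchy--Schwarz step is not carried out. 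As it stands, the proposal does not prove the lemma; you would either need to execute that second plan in full or simply cite \cite{BBN} as the paper does.
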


We start with the following simple lemma.

\begin{lemma}\label{easy}
Let $s\le t$ and let $H_1,H_2,\dots, H_s, H_{s+1}$ be sets in $\binom{[n]}{k}$ and $x\in [n]\setminus \cup_{i=1}^{s+1}H_i$. Suppose that $\cF \subseteq \{F\in \binom{[n]}{k}: x\in F\}$ such that for $\cF':=\cF\cup \{H_1,H_2,\dots, H_{s+1}\}$ the induced subgraph $K(n,k)[\cF']$ is $K_{s,t}$-free. Then there exists $n_0=n_0(k,s,t)$ such that if $n\ge n_0$ holds, then we have $$|\cF|\le \binom{n-1}{k-1}-\binom{n-\lfloor\frac{(s+1)
k}{2}\rfloor-1}{k-1}+(s+1)(t-1).$$
\end{lemma}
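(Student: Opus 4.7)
My plan is to combine a double-counting argument exploiting the $K_{s,t}$-freeness (applied to $s$-subsets of the $H_i$'s) with a structural bound on the number of $k$-sets through $x$ that meet at least two of the $H_i$.

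First, I would observe that for each $I\subseteq[s+1]$ with $|I|=s$ one has $|\{F\in\cF:F\cap H_i=\emptyset\text{ for all }i\in I\}|\le t-1$; otherwise the $s$ sets $\{H_i:i\in I\}$ together with any $t$ such $F$'s form a $K_{s,t}$ in $K(n,k)[\cF']$ (all $s+t$ vertices are distinct since the $F$'s contain $x$ and the $H_i$'s do not). Setting $d(F):=|\{i\in[s+1]:F\cap H_i=\emptyset\}|$ and double-counting pairs $(F,I)$ with $I\subseteq\{i:F\cap H_i=\emptyset\}$ and $|I|=s$ yields
\[
\sum_{F\in\cF}\binom{d(F)}{s}=\sum_{|I|=s}|\{F\in\cF:F\cap H_i=\emptyset\text{ for all }i\in I\}|\le(s+1)(t-1).
\]
Since $d(F)\ge s$ forces $\binom{d(F)}{s}\ge 1$, at most $(s+1)(t-1)$ members of $\cF$ meet at most one of the $H_i$. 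Denote by $\cF^*$ the remaining subfamily; each of its members meets at least two of the $H_i$.

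Next, I would bound $|\cF^*|$ via the structure of $M:=\bigcup_i H_i$. Partition $M=M_1\sqcup M_2$, with $M_1$ the elements lying in at least two of the $H_i$ and $M_2=M\setminus M_1$ the elements lying in exactly one $H_i$. Counting incidences by multiplicity, $2|M_1|+|M_2|\le\sum_i|H_i|=(s+1)k$, whence $|M_1|\le\lfloor(s+1)k/2\rfloor$; moreover equality forces $|M_2|\le 1$. Every $F\in\cF^*$ either (a) meets $M_1$ (so a single element of $F\cap M_1$ witnesses membership in two of the $H_i$), or (b) avoids $M_1$ entirely and contains elements from two distinct pieces $U_i:=H_i\cap M_2$. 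The number of $F\in\binom{[n]}{k}$ with $x\in F$ satisfying (a) is at most $\binom{n-1}{k-1}-\binom{n-1-|M_1|}{k-1}$, while (b) is bounded by $\sum_{i<j}|U_i|\,|U_j|\binom{n-3}{k-3}=O_{s,k}(n^{k-3})$, using that the $U_i$ are pairwise disjoint and each of size at most $k$.

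Finally, I would check the two cases. If $|M_1|=\lfloor(s+1)k/2\rfloor$ then $|M_2|\le 1$ makes (b) vanish and (a) already equals the target $\binom{n-1}{k-1}-\binom{n-\lfloor(s+1)k/2\rfloor-1}{k-1}$. Otherwise $|M_1|\le\lfloor(s+1)k/2\rfloor-1$, so (a) falls below the target by at least $\binom{n-\lfloor(s+1)k/2\rfloor-1}{k-2}=\Theta(n^{k-2})$, which comfortably absorbs the $O(n^{k-3})$ contribution from (b) as soon as $n\ge n_0(s,t,k)$. Adding back the at most $(s+1)(t-1)$ sets of $\cF\setminus\cF^*$ yields the stated bound.

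The main obstacle, I expect, is recovering the exact floor $\lfloor(s+1)k/2\rfloor$ rather than the much weaker $(s+1)k$ that a naive bound via $|\bigcup_i H_i|$ would give. This is what forces the two-tier partition $M=M_1\sqcup M_2$: only the doubly covered part $M_1$ enters the main term, and the observation that $|M_1|$ cannot be maximal unless $|M_2|$ is negligible is precisely what makes the lower-order (b)-term fit inside the slack of (a).
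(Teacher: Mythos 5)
Your proposal is correct and follows essentially the same route as the paper: both first bound by $(s+1)(t-1)$ the number of sets meeting at most one $H_i$ (you just make the counting explicit via the $\binom{d(F)}{s}$ double count), and both then split the doubly-covered part of $\bigcup_i H_i$ (your $M_1$ is the paper's $T$) from the rest, with the same two-case analysis according to whether $|M_1|$ attains $\lfloor (s+1)k/2\rfloor$. The only cosmetic difference is that you phrase the near-equality case via $|M_2|\le 1$ while the paper says at most one $H_j\setminus T$ is nonempty; these are equivalent for the purpose of killing the type (b) term.
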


\begin{proof}
The number of sets in $\cF$ that meet at most one $H_j$ is at most $(s+1)(t-1)$ as $K(n,k)[\cF']$ is $K_{s,t}$-free.
Let us define $T=\{y\in [n]: \exists i\neq j \hskip 0.3truecm y\in H_i\cap H_j\}$. Those sets in $\cF$ that meet at least two of the $H_j$'s must either a) intersect $T$ or b) intersect at least two of the $(H_j\setminus T)$'s. Clearly, $|T|\le \lfloor\frac{(s+1)k}{2}\rfloor$, so the number of sets in $\cF$ meeting $T$ is at most $\binom{n-1}{k-1}-\binom{n-1-|T|}{k-1}\le \binom{n-1}{k-1}-\binom{n-\lfloor\frac{(s+1)
k}{2}\rfloor-1}{k-1}=:B$.

Assume first $|T|<\lfloor\frac{(s+1)
k}{2}\rfloor$, then $B-(\binom{n-1}{k-1}-\binom{n-1-|T|}{k-1})=\Omega(n^{k-2})$. Observe that the number of sets in $\cF$ that are disjoint with $T$ and meet at least two $H_j\setminus T$ is at most $\sum_{i,j}|H_i\setminus T|\cdot|H_j\setminus T|\binom{n-3}{k-3}\le \binom{s+1}{2}k^2\binom{n-3}{k-3}=O(n^{k-3})$. Therefore if $n$ is large enough, then $|\cF|\le \binom{n-1}{k-1}-\binom{n-\lfloor\frac{(s+1)
k}{2}\rfloor-1}{k-1}-\varepsilon n^{k-2}$ for some $\varepsilon>0$.

Assume now $T=\lfloor\frac{(s+1)
k}{2}\rfloor$. This implies that at most one of the $H_j\setminus T$ is non-empty, so $\cF$ does not contain sets of type b). Thus we have $|\cF|\le B+(s+1)(t-1)$. 
\end{proof}



Now we are ready to prove our main result on families $\cF\subseteq \binom{[n]}{k}$ with $K(n,k)[\cF]$ being $K_{s,t}$-free.

\begin{proof}[Proof of Theorem \ref{stabst}]
Let $\cF\subseteq \binom{[n]}{k}$ be a family such that $K(n,k)[\cF]$ is $K_{s,t}$-free and $|\cF|= \binom{n-1}{k-1}-\binom{n-sk-1}{k-1}+s+t-1$. We consider three cases according to the value of $\ell(\cF)$.

\bigskip

\textsc{Case I}: $\ell(\cF)=s$.

\smallskip

Consider $F_1,F_2,\dots, F_s\in \cF$ such that $\cF'=\cF\setminus \{F_i: 1\le i\le s\}$ is intersecting. Then, as $$|\cF'|=\binom{n-1}{k-1}-\binom{n-sk-1}{k-1}+t-1> \binom{n-1}{k-1}-\binom{n-k-1}{k-1},$$ Theorem \ref{hm} implies that the sets in $\cF'$ share a common element. Since $K(n,k)[\cF]$ is $K_{s,t}$-free $\cF'$ can contain at most $t-1$ sets disjoint from $T:=\cup_{i=1}^sF_i$. So the size of $\cF$ is at most $$\binom{n-1}{k-1}-\binom{n-|T|-1}{k-1}+t-1+s\le \binom{n-1}{k-1}-\binom{n-sk-1}{k-1}+s+t-1$$ with equality if and only if $\cF$ is isomorphic to some $\cF_{s,t}$.

\bigskip

\textsc{Case II}: $s+1\le \ell(\cF)\le (\binom{n-1}{k-1}-\binom{n-sk-1}{k-1})^{1-\frac{1}{3s}}$.

\smallskip

Let $\cF'$ be a largest intersecting subfamily of $\cF$. As the size of $\cF'$ is $\binom{n-1}{k-1}-\binom{n-sk-1}{k-1}+s+t-1-l(\cF)$ which is larger than $\binom{n-1}{k-1}-\binom{n-k-1}{k-1}+1$ if $n$ is large enough, Theorem \ref{hm} implies that the sets in $\cF'$ share a common element. Let us apply Lemma \ref{easy} to $\cF'$ and $s+1$ sets $F_1,F_2,\dots, F_{s+1} \in \cF\setminus \cF'$ to obtain $$|\cF'|\le \binom{n-1}{k-1}-\binom{n-\frac{(s+1)
k}{2}-1}{k-1}+(s+1)(t-1).$$ Therefore, we have 
\[
|\cF|\le \binom{n-1}{k-1}-\binom{n-\frac{(s+1)
k}{2}-1}{k-1}+(s+1)(t-1) +\left(\binom{n-1}{k-1}-\binom{n-sk-1}{k-1}\right)^{1-\frac{1}{3s}},
\]
which is smaller than $\binom{n-1}{k-1}-\binom{n-sk-1}{k-1},$ if $n$ is large enough.
\bigskip

\textsc{Case III}: $\left(\binom{n-1}{k-1}-\binom{n-sk-1}{k-1}\right)^{1-\frac{1}{3s}}\le \ell(\cF)$.

\smallskip

Then by Lemma \ref{bbn}, we have $$e(K(n,k)[\cF])\ge \frac{\left(\binom{n-1}{k-1}-\binom{n-sk-1}{k-1}\right)^{2-\frac{2}{3s}}}{2\binom{2k}{k}}.$$ For large enough $n$, this is larger than $(1/2+o(1))(t-1)^{\frac{1}{s}}|\cF|^{2-\frac{1}{s}}$, so $K(n,k)[\cF]$ contains $K_{s,t}$ by Theorem \ref{kst}.
\end{proof}

\bigskip

\begin{proof}[Proof of Theorem \ref{stabmulti}]
Let $\cF \subseteq \binom{[n]}{k}$ be a family of size $ \binom{n}{k}-\binom{n-r+1}{k}+\binom{n-r}{k-1}-\binom{n- s_{r+1}k-r}{k-1}+s_r+s_{r+1}-1$ with $\ell_{r+1}(\cF)\ge s_{r+1}$ such that $K(n,k)[\cF]$ is $K_{s_1,s_2,\dots,s_{r+1}}$-free. The proof proceeds by a case analysis according to the number of large degree vertices. We say that $x\in [n]$ has \textit{large degree} if $\cF_x=\{F\in \cF: x\in F\}$ has size at least $d=\binom{n-1}{k-1}-\binom{n-Q  k-1}{k-1}+Q$ where $Q:=\sum_{i=1}^{r+1}s_i$. Let $D$ denote the set of large degree vertices. We will use the following claim in which $G_1 \oplus G_2$ denotes the \textit{join} of $G_1$ and $G_2$, i.e. the graph consisting of disjoint copies of $G_1$ and $G_2$ with all possible edges between the $G_1$ and $G_2$.

\begin{claim}\label{extend}
Suppose $\cF$ contains a subfamily $\cG\subseteq \binom{[n]\setminus D}{k}$ with $|\cG|\le Q-\sum_{i=1}^{|D|}s_i$ and $K(n,k)[\cG]$ is isomorphic to $G$, then $K(n,k)[\cF]$ contains $K_{s_1,s_2,\dots,s_{|D|}} \oplus G$.
\end{claim}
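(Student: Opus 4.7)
My plan is to argue by induction on $|D|$. The base case $|D|=0$ is immediate, since then the hypothesis gives $\cG$ itself as a copy of $G$ in $K(n,k)[\cF]$. For the inductive step, I would fix any $x_i\in D$ and try to extend $\cG$ to $\cG'=\cG\cup\{F_{i,1},\dots,F_{i,s_i}\}$, where the new sets $F_{i,a}\in\cF_{x_i}$ form the $i$-th class of the multipartite side of the desired join. In order to be able to invoke the induction hypothesis with the smaller set $D'=D\setminus\{x_i\}$, I will require each new $F_{i,a}$ to meet $D$ only at $x_i$ (so that $\cG'\subseteq\binom{[n]\setminus D'}{k}$), and to be disjoint from every $H\in\cG$ (so as to realize the join edges). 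The new class then automatically forms an independent set in $K(n,k)$ through the common element $x_i$, and the inductive conclusion applied to $(\cG',D')$ will supply the remaining classes together with all their cross edges to the $G$-side.

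Everything then reduces to showing that these $s_i$ new sets exist. At the inductive step, the number of $k$-sets in $\cF_{x_i}$ that are disjoint from $\bigcup\cG$ and meet $D$ only in $x_i$ is at least
\[
|\cF_{x_i}|-\Bigl(\binom{n-1}{k-1}-\binom{n-1-k|\cG|}{k-1}\Bigr)-(|D|-1)\binom{n-2}{k-2},
\]
where the last term bounds the number of sets through $x_i$ that also contain some other $x_l\in D$. Plugging in the large-degree hypothesis $|\cF_{x_i}|\ge d=\binom{n-1}{k-1}-\binom{n-Qk-1}{k-1}+Q$ together with the constraint $|\cG|\le Q-\sum_j s_j\le Q$, this lower bound is at least $Q-(|D|-1)\binom{n-2}{k-2}$, which comfortably exceeds $s_i$ provided $|D|$ is bounded independently of $n$. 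Note that the inductive hypothesis on $\cG'$ at the next step is preserved automatically: $|\cG'|=|\cG|+s_i\le Q-\sum_{j\ne i}s_j$, exactly the required bound for $D'$.

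The hard part, then, is controlling $|D|$. My plan is to use the double-counting inequality $|D|\cdot d\le\sum_{x\in D}|\cF_x|\le k|\cF|$ together with the a priori upper bound on $|\cF|$ available at the point where this claim is invoked in the proof of Theorem~\ref{stabmulti}; this should force $|D|\le C$ for some constant $C=C(k,s_1,\dots,s_{r+1})$ independent of $n$. Once $|D|$ is so controlled, the $+Q$ slack built into the definition of $d$ is precisely what is needed to absorb both the $(|D|-1)\binom{n-2}{k-2}$ loss from the second-anchor correction and the $s_i-1$ previously chosen sets within the class, making the counting above succeed for all sufficiently large $n$. The induction then goes through and produces the desired copy of $K_{s_1,\dots,s_{|D|}}\oplus G$ inside $K(n,k)[\cF]$.
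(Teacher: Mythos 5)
Your overall strategy (greedy selection of the multipartite classes using the degree threshold $d$, one anchor vertex $x_i$ at a time) is the same as the paper's, but your central counting step is broken. You lower-bound the number of admissible sets in $\cF_{x_i}$ by $Q-(|D|-1)\binom{n-2}{k-2}$ and assert that this ``comfortably exceeds $s_i$'' once $|D|$ is bounded by a constant. For $k\ge 3$ and $|D|\ge 2$ this quantity is hugely \emph{negative} for large $n$: $Q$ is a constant while $(|D|-1)\binom{n-2}{k-2}=\Omega(n^{k-2})$, and no constant bound on $|D|$ changes that. The error comes from discarding the slack too early: you relaxed $|\cG|\le Q-\sum_j s_j$ to $|\cG|\le Q$ before subtracting, whereas the unconsumed part of the binomial difference, $\binom{n-1-k|\cG|}{k-1}-\binom{n-1-Qk}{k-1}\ge k\bigl(\sum_j s_j\bigr)\binom{n-Qk-1}{k-2}$, is exactly what must absorb the $(|D|-1)\binom{n-2}{k-2}$ loss (and it does, since $k\sum_j s_j\ge k|D|>|D|-1$). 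Alternatively --- and this is what the paper does --- you can drop the requirement that the new sets avoid $D\setminus\{x_i\}$ altogether: for a (not necessarily induced) copy of $K_{s_1,\dots,s_{|D|}}\oplus G$ each new set only needs to be disjoint from the union of the at most $Q-1$ previously chosen sets and the sets of $\cG$, a set of size at most $Qk$; the definition of $d$ then hands you at least $Q>s_i-1$ candidates at every step with no induction on $|D|$ needed.

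Your third paragraph is also off target. The double count $|D|\cdot d\le k|\cF|$ gives only $|D|=O(n)$, since $d=\Theta(n^{k-2})$ while $|\cF|=\Theta(n^{k-1})$, so it does not bound $|D|$ by a constant. No such external argument is needed: the hypothesis $0\le|\cG|\le Q-\sum_{i=1}^{|D|}s_i$ with each $s_i\ge 1$ already forces $|D|\le Q$ whenever the claim is applicable.
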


\begin{proof}[Proof of Claim]
Note that $d$ is $Q$ plus the number of $k$-subsets of $[n]$ containing a fixed element $x$ of $[n]$ and meeting a set $S$ of size $Qk$. As $K_{s_1,s_2,\dots,s_{|D|}} \oplus G$ contains at most $Qk$ vertices, we can pick the sets corresponding to $K_{s_1,s_2,\dots,s_{|D|}}$ greedily. Indeed, for each high degree vertex, we can choose $s_i$ sets containing it which avoid the set spanned by the already chosen sets and the (at most $Q$) sets corresponding to $G$.
\end{proof}

\smallskip

\textsc{Case I:} $|D|\ge r$.

\smallskip

Let $D'\subset D$ be of size $r$ and let $F_1,F_2,\dots, F_{s_{r+1}}$ be sets in $\cF$ not meeting $D'$. (There exists such sets as otherwise $\ell_{r+1}(\cF)<s_{r+1}$ would hold.) Applying Claim \ref{extend} with $\cG=\{F_1,F_2,\dots,F_{s_{r+1}}\}$ we obtain that $K(n,k)[\cF]$ is not $K_{s_1,s_2,\dots, s_{r+1}}$-free.

\

\textsc{Case II:} $|D|= r-1$.

\smallskip

Then $\cF'=\cF\setminus \cup_{x\in D}\cF_x\subseteq \binom{[n]\setminus D}{k}$ has size at least $\binom{n-r}{k-1}-\binom{n-r-s_{r+1}k}{k-1}+s_r+s_{r+1}-1$ with equality if and only if $\cup_{x\in D}\cF_x$ contains all $k$-sets meeting $D$. Either $K(n,k)[\cF']$ contains $K_{s_r,s_{r+1}}$ and thus, by Claim \ref{extend}, $\cF$ contains $K_{s_1,s_2,\dots,s_{r+1}}$. Otherwise note that $\ell_{r+1}(\cF)\ge s_{r+1}$ implies $\ell_2(\cF')=\ell(\cF')\ge s_{r+1}$, so Theorem \ref{stabst} implies that if $n$ is large enough, then $\cF'$ is some $\cF_{s_r,s_{r+1}}$ and thus, $\cF$ is some $\cF_{s_1,s_2,\dots,s_{r+1}}$.

\ 

\textsc{Case III:} $|D|\le r-2$.

\smallskip

In this case $\cF'=\cF\setminus \cup_{x\in D}\cF_x\subseteq \binom{[n]\setminus D}{k}$ has size at least $\binom{n-r+1}{k-1}+\binom{n-r}{k-1}-\binom{n-r-s_{r+1}k}{k-1}+s_r+s_{r+1}-1$. The order of magnitude of this is $n^{k-1}$, thus it is larger than $Qkd$ if $n$ is large enough. We claim that $K(n,k)[\cF']$ contains $K_Q$ and therefore a copy of $K_{s_1,s_2,\dots,s_{r+1}}$. Indeed, for any $F\in \cF'$ there are at most $kd$ sets in $\cF'$ that intersect $F$, thus we can pick $Q$ pairwise disjoint sets greedily.

\end{proof}

\begin{proof}[Proof of Theorem \ref{chi3}]

First we show the construction for the lower bound. For a graph $F$ with $\chi(F)\ge 3$, let $\cG_F\subseteq \binom{[n-\chi(F)+2]}{k}$ be a family of size $\widehat{ex}^{(2)}_{v}(n-\chi(F)+2,k,\cB_{F,\eta})$ such that $K(n-\chi(F)+2,k)[\cG_F]$ is $B$-free for any $B\in \cB_{F,\eta}$ and $\ell(\cG_F)=\eta(F)$. Let us define $\cF_F\subseteq \binom{[n]}{k}$ as $$\cF_F=\cG_F\cup \left\{K\in \binom{[n]}{k}:K \cap [n-\chi(F)+3,n]\neq \emptyset\right\}.$$
Clearly, we have 
$$|\cF_F|=\binom{n}{k}-\binom{n-\chi(F)+2}{k}+\widehat{ex}^{(2)}_{v}(n-\chi(F)+2,k,\cB_{F,\eta})$$ 
and we claim that $K(n,k)[\cF_F]$ is $F$-free. Indeed, if $K(n,k)[\cF_F]$ contains $F$, then $K(n,k)[\cG_F]$ contains some $B\in \cB_F$, as $ \{K\in \binom{[n]}{k}:K \cap [n-\chi(F)+3,n]\neq \emptyset\}$ is the union $\chi(F)-2$ intersecting families. This is impossible for $B\in \cB_{F,\eta}$ by definition of $\cG_F$, and it is also impossible for $B\in \cB_F\setminus \cB_{F,\eta}$ as $\ell(\cG_F)=\eta(F)<\eta(B)$.

The proof of the upper bound is basically identical to that of the upper bound in Theorem \ref{stabmulti}, so we just outline it.
Let $\cF\subseteq \binom{[n]}{k}$ with $\ell_{\chi(F)}(\cF)\ge \eta(F)$ and $|\cF|\ge \binom{n}{k}-\binom{n-\chi(F)+2}{k}$ be such that $K(n,k)[\cF]$ is $F$-free. Let us define $d=\binom{n-1}{k}-\binom{n-|v(F)|k-1}{k}+|V(F)|$ and let $D\subseteq V(F)$ be the set of vertices with degree at least $d$ in $\cF$.

\

\textsc{Case I:} $|D|\ge \chi(F)-1$.

\smallskip 

Then one can pick sets of $\cF$ greedily to form a copy of $F$ in $K(n,k)[\cF]$, a contradiction.

\ 

\textsc{Case II:} $|D|=\chi(F)-2$.

\smallskip 

Then $\cF'=\{K\in \cF:K\cap D\neq \emptyset\}$ has size at most $\binom{n}{k}-\binom{n-\chi(F)+2}{k}$. Also $K(n,k)[\cF\setminus \cF']$ cannot contain any $B\in \cB_{F,\eta}$, as otherwise $K(n,k)[\cF]$ would contain $F$. Observe that $\ell_{\chi(F)}(\cF)\ge \eta(F)$ implies $\ell(\cF\setminus \cF')\ge \eta(F)$, so we have $|\cF\setminus \cF'|\le ex^{(2)}_{v}(n,k,\cB_{F,\eta})$.

\ 

\textsc{Case III:} $|D|\le \chi(F)-3$.

\smallskip 

Then $\cF'=\{K\in \cF:K\cap D\neq \emptyset\}$ has size at most $\binom{n}{k}-\binom{n-\chi(F)+3}{k}$. Therefore $\cF\setminus \cF'$ is of size at least $\binom{n-\chi(F)+2}{k-1}$. If $n$ is large enough compared to $k$, then one can pick greedily a copy of $K_{|V(F)|}$ in $K(n,k)[\cF\setminus \cF']$.
\end{proof}

\bigskip

Now we turn our attention to proving theorems on families that induce cycle-free subgraphs in the Kneser graph.

\begin{proof}[Proof of Theorem \ref{cycle6}]
Let $\cF\subseteq \binom{[n]}{k}$ be a family of subsets such that $K(n,k)[\cF]$ is $C_6$-free, $\ell(\cF)\ge 3$ and $|\cF|= \binom{n-1}{k-1}-\binom{n-2k-1}{k-1}+10^6 (\binom{n-1}{k-1}-\binom{n-2k-1}{k-1})^{3/4}$. 

\bigskip

\textsc{Case I:} $\ell(\cF)\le \frac{10^6}{2}(\binom{n-1}{k-1}-\binom{n-2k-1}{k-1})^{3/4}$.

\smallskip

Let $H_1,H_2,\dots,H_{\ell(\cF)}$ be sets in $\cF$ such that $\cF':=\cF\setminus \{H_1,H_2,\dots,H_{\ell(\cF)}\}$ is intersecting. Then as $$|\cF'|\ge \binom{n-1}{k-1}-\binom{n-2k-1}{k-1}+\frac{10^6}{2}\left(\binom{n-1}{k-1}-\binom{n-2k-1}{k-1}\right)^{3/4}>\binom{n-1}{k-1}-\binom{n-k-1}{k-1}+1,$$ Theorem \ref{hm} implies that the sets in $\cF'$ share a common element $x$. The $H_i$'s do not contain this $x$, since $\cF'$ is a largest intersecting family in $\cF$. As $|H_i\cup H_j|\le 2k$ and $$|\cF'|\ge \binom{n-1}{k-1}-\binom{n-2k-1}{k-1}+\frac{10^6}{2}\left(\binom{n-1}{k-1}-\binom{n-2k-1}{k-1}\right)^{3/4},$$ for any $i\neq j$ there exist 3 sets $F_{i,j,1},F_{i,j,2},F_{i,j,3} \in \cF'$ that are disjoint from $H_i\cup H_j$. So we can find a copy of $C_6$ in $\cF$, in which the sets $H_1,H_2,H_3$ represent three independent vertices and the other three sets can be chosen from $\{F_{i,j,k}: 1\le i<j\le 3, 1\le k\le 3\}$ greedily.

\bigskip

\textsc{Case II:} $\ell(\cF)\ge \frac{10^6}{2}(\binom{n-1}{k-1}-\binom{n-2k-1}{k-1})^{3/4}$.

\smallskip

By Lemma \ref{bbn} $K(n,k)[\cF]$ contains at least $\frac{10^{12}}{8\binom{2k}{k}}(\binom{n-1}{k-1}-\binom{n-2k-1}{k-1})^{3/2}$ edges and when $n$ is large enough, this is bigger than $300|\cF|^{4/3}$, so by Theorem \ref{bs} it contains a copy of $C_6$, as desired.
\end{proof}

\begin{proof}[Proof of Theorem \ref{cycles}] Let $\cF\subseteq \binom{[n]}{k}$ be a family of subsets such that $K(n,k)[\cF]$ is $C_{2s}$-free, $\ell(\cF)\ge 3$ and $|\cF|= \binom{n-1}{k-1}-\binom{n-2k}{k-1}+(k^2+1)\binom{n-3}{k-3}$.

\bigskip 

\textsc{Case I}: $\ell(\cF)\le 20s2^k(\binom{n-1}{k-1}-\binom{n-2k}{k-1})^{\frac{s+1}{2s}}$.

\smallskip 

Let $H_1,H_2,\dots,H_{\ell(\cF)}$ be sets in $\cF$ such that $\cF':=\cF\setminus \{H_1,H_2,\dots,H_{\ell(\cF)}\}$ is intersecting. Then as $|\cF'|\ge \binom{n-1}{k-1}-\binom{n-2k}{k-1}+(k^2+1)\binom{n-3}{k-3}-20s2^k(\binom{n-1}{k-1}-\binom{n-2k}{k-1})^{\frac{s+1}{2s}}>\binom{n-1}{k-1}-\binom{n-k-1}{k-1}+1$, Theorem \ref{hm} implies that the sets in $\cF'$ share a common element $x$. The $H_i$'s do not contain this $x$, since $\cF'$ is a maximal intersecting family in $\cF$. Let us define the following auxiliary graph $\Gamma$ with vertex set $\{H_1,H_2,\dots,H_s\}$: two sets $H_i,H_j$ are adjacent if and only if there exist $s$ sets in $\cF'$ that are disjoint from $H_i\cup H_j$. Observe that if $\Gamma$ contains a Hamiltonian cycle, then $\cF$ contains a copy of $C_{2s}$. Indeed, if $H_{\sigma(1)},H_{\sigma(2)},\dots, H_{\sigma(s)}$ is a Hamiltonian cycle, then for any pair  $H_{\sigma(i)},H_{\sigma(i+1)}$ (with $s+1=1$) we can greedily pick different sets $F_i\in\cF'$ with $F_i\cap (H_{\sigma(i)}\cup H_{\sigma(i+1)})=\emptyset$ to get $H_{\sigma(1)}, F_1, H_{\sigma(2)},F_2,\dots, H_{\sigma_(s)},F_s$ a copy of $C_{2s}$ in $K(n,k)[\cF]$. Therefore the next claim and Dirac's theorem \cite{D} finishes the proof of Case I.

\begin{claim}
The minimum degree of $\Gamma$ is at least $s-2$.
\end{claim}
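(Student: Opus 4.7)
The plan is a proof by contradiction: assume that some $H_i$ has two distinct non-neighbors $H_{j_1}, H_{j_2}$ in $\Gamma$, so that $|A_\alpha| \le s-1$ for $\alpha \in \{1, 2\}$, where $A_\alpha = \{F \in \cF' : F \cap (H_i \cup H_{j_\alpha}) = \emptyset\}$. I will derive a quantitative contradiction from the large lower bound on $|\cF'|$ and the Case I upper bound on $\ell(\cF)$.

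First I would use that $\cF'$ is a largest intersecting subfamily of $\cF$ and by Hilton--Milner has a common element $x$, so $\cF' = \{F \in \cF : x \in F\}$ and the $H_i$'s avoid $x$. The number of $k$-subsets containing $x$ missing from $\cF'$ is then $M = \binom{n-1}{k-1} - |\cF'| = \binom{n-2k}{k-1} + \ell(\cF) - (k^2+1)\binom{n-3}{k-3}$. Let $A_\alpha^\ast$ denote the missing $k$-subsets containing $x$ disjoint from $H_i \cup H_{j_\alpha}$; from $|A_\alpha| \le s-1$ we get $|A_\alpha^\ast| \ge \binom{n-1-|H_i \cup H_{j_\alpha}|}{k-1} - s + 1$. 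Since $A_1^\ast \cup A_2^\ast$ consists of missing sets, inclusion--exclusion gives $|A_1^\ast| + |A_2^\ast| - |A_1^\ast \cap A_2^\ast| \le M$; combined with $|A_1^\ast \cap A_2^\ast| \le \binom{n-1-u}{k-1}$, where $u = |H_i \cup H_{j_1} \cup H_{j_2}|$, this yields
\[
2\binom{n-1-|H_i \cup H_{j_\alpha}|}{k-1} - \binom{n-1-u}{k-1} - \binom{n-2k}{k-1} + (k^2+1)\binom{n-3}{k-3} \le \ell(\cF) + 2s - 2.
\]

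Next I would use this inequality to eliminate intersection patterns. A first use (applied to a single non-adjacency, using $|B_i| := |\{F \in \cF' : F \cap H_i = \emptyset\}| \ge \binom{n-k-1}{k-1} - M$ together with the upper bound $|B_i \setminus A_\alpha| \le \binom{n-k-1}{k-1} - \binom{n-k-1-|H_{j_\alpha} \setminus H_i|}{k-1}$) forces $H_i \cap H_{j_\alpha} = \emptyset$: any intersection would yield $(k^2+1)\binom{n-3}{k-3} \le \ell(\cF) + s - 1$, contradicting the Case I bound $\ell(\cF) = O(n^{(k-2)(s+1)/(2s)})$. Then inclusion--exclusion forces $|H_{j_1} \cap H_{j_2}| \ge k-1$ (otherwise the LHS of the displayed inequality is $\Omega(n^{k-2})$ while $\ell(\cF) = o(n^{k-2})$). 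In this final configuration, Pascal's identity simplifies the inequality to
\[
(k^2+1)\binom{n-3}{k-3} - \binom{n-2k-2}{k-3} \le \ell(\cF) + 2s - 2,
\]
whose LHS is of order $k^2 n^{k-3}/(k-3)!$; whenever $(k-2)(s+1)/(2s) < k-3$, equivalently $k > 4 + 2/(s-1)$, the Case I RHS is $o(n^{k-3})$, giving the desired contradiction for large $n$.

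The main technical obstacle is the small-$k$ regime $k \in \{3, 4\}$, where the Case I permissible range for $\ell(\cF)$ is not strictly dominated by $(k^2+1)\binom{n-3}{k-3}$ and the above quantitative argument does not close. A separate analysis will be needed — perhaps tightening the bound on $|A_1^\ast \cap A_2^\ast|$ by exploiting that $\cF$ is $C_{2s}$-free (which rules out additional candidate sets from appearing in $\cF'$), or directly producing a copy of $C_{2s}$ inside $K(n,k)[\cF]$ from the forced structure $H_i \cap H_{j_1} = H_i \cap H_{j_2} = \emptyset$, $|H_{j_1} \cap H_{j_2}| \ge k-1$, using the many sets of $\cF'$ individually disjoint from $H_{j_1}$ or $H_{j_2}$.
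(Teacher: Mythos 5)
Your argument is sound and reaches the paper's conclusion by a genuinely different endgame. The shared skeleton: Hilton--Milner pins down the star $\cF'=\{F\in\cF:x\in F\}$, non-adjacency of two $H$'s in $\Gamma$ forces them to be disjoint (your ``first use'' is the same count as the paper's, just phrased through the missing sets), and a vertex with two non-neighbours yields the contradiction. Where you diverge: the paper splits $\cF'$ according to whether a set meets $H_1\cup(H_2\cap H_3)$, avoids $H_1\cup H_2$, avoids $H_1\cup H_3$, or hits both $H_2\setminus H_3$ and $H_3\setminus H_2$, and then pigeonholes over the at most $k^2$ pairs $(h_2,h_3)$ to find a pair lying in more than $\binom{n-3}{k-3}$ members of the star, which is absurd. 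You instead run inclusion--exclusion on the complement of $\cF'$ inside the star of $x$, which first forces $|H_{j_1}\cap H_{j_2}|=k-1$ and then telescopes, via Pascal, to $(k^2+1)\binom{n-3}{k-3}-\binom{n-2k-2}{k-3}\le\ell(\cF)+2s-2$; I checked this computation and it is correct. Your route needs the extra structural step of pinning the intersection size, which the paper avoids, but both arguments ultimately rest on the same numerical comparison between $\binom{n-3}{k-3}$ and $\ell(\cF)$.

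On the obstacle you flag for $k\in\{3,4\}$: the paper's own proof has exactly the same problem, so this is not a defect of your approach relative to the printed one. The paper's pigeonhole requires $(k^2+1)\binom{n-3}{k-3}-20s2^k\left(\binom{n-1}{k-1}-\binom{n-2k}{k-1}\right)^{(s+1)/(2s)}>k^2\binom{n-3}{k-3}$, i.e.\ $\binom{n-3}{k-3}$ must exceed the Case I ceiling on $\ell(\cF)$, which is of order $n^{(k-2)(s+1)/(2s)}$; this holds precisely when $k>4+2/(s-1)$, i.e.\ $k\ge 5$ --- the same threshold you computed. (Even the preliminary step ``non-adjacent implies disjoint'' needs $(k^2+1)\binom{n-3}{k-3}\ge\ell(\cF)+s$ in both proofs.) So as written, neither your argument nor the paper's covers $k=3,4$, although the theorem is stated for $k\ge 3$; your honesty about this is a point in your favour, and any genuine repair for small $k$ would have to go beyond what the paper itself provides.
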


\begin{proof}[Proof of Claim]
First note that if $H_i$ and $H_j$ are not joined in $\Gamma$, then they must be disjoint. Indeed, otherwise $|H_i\cup H_j|\le 2k-1$ and as $|\cF'|\ge \binom{n-1}{k-1}-\binom{n-2k}{k-1}+s$, there are at least $s$ sets in $\cF'$ avoiding $H_i\cup H_j$. Now assume for contradiction that $H_1$ is not connected to $H_2$ and $H_3$, so in particular $H_1\cap (H_2\cup H_3)=\emptyset$. Observe the following
\begin{itemize}
\item
there are at most $s-1$ sets in $\cF'$ that avoid $H_1\cup H_2$ and another $s-1$ sets avoiding $H_1\cup H_3$,
\item
as $|H_1 \cup (H_2\cap H_3)|\le 2k-1$, there are at most $\binom{n-1}{k-1}-\binom{n-2k}{k-1}$ sets in $qcF'$ that meet $H_1 \cup (H_2\cap H_3)$.
\end{itemize}

So there are at least $(k^2+1)\binom{n-3}{k-3}-20s2^k(\binom{n-1}{k-1}-\binom{n-2k}{k-1})^{\frac{s+1}{2s}}$ sets of $\cF'$ containing at least one element $h_2\in H_2\setminus H_3$ and one element $h_3\in H_3\setminus H_2$. Since the number of such pairs is at most $k^2$, there exists a pair $h_2,h_3$ such that the number of sets in $\cF'$ containing both $h_2,h_3$ is more than $\binom{n-3}{k-3}$. But this is clearly impossible as the total number of $k$-sets containing $x,h_2,h_3$ is $\binom{n-3}{k-3}$.
\end{proof}
\textsc{Case II:} $\ell(\cF)\ge 20s2^k(\binom{n-1}{k-1}-\binom{n-2k}{k-1})^{\frac{s+1}{2s}}$.

\smallskip

By Lemma \ref{bbn} $K(n,k)[\cF]$ contains at least $\frac{400s^2 2^{2k}}{2\binom{2k}{k}}(\binom{n-1}{k-1}-\binom{n-2k}{k-1})^{\frac{s+1}{s}} > 100s |\cF|^{1+1/s}$ edges, and thus by Theorem \ref{bs} it contains a copy of $C_{2s}$.
\end{proof}

\bigskip

\textbf{Funding}: Research supported by the \'{U}NKP-17-3 New National Excellence Program of the Ministry of Human Capacities, by National Research, Development and Innovation Office - NKFIH under the grants SNN 116095 and K 116769, by the J\'anos Bolyai Research Fellowship of the Hungarian Academy of Sciences and the Taiwanese-Hungarian Mobility Program of the Hungarian Academy of Sciences.

\end{document}